\newcommand\reallywidehat[1]{%
	\savestack{\tmpbox}{\stretchto{%
			\scaleto{%
				\scalerel*[\widthof{\ensuremath{#1}}]{\kern-.6pt\bigwedge\kern-.6pt}%
				{\rule[-\textheight/2]{1ex}{\textheight}}
			}{\textheight}%
		}{0.5ex}}%
	\stackon[1pt]{#1}{\tmpbox}%
}
\renewcommand*{\backref}[1]{}
\renewcommand*{\backrefalt}[4]{%
	\ifcase #1 (Not cited.)%
	\or        (Cited on page~#2.)%
	\else      (Cited on pages~#2.)%
	\fi}
\newcommand{\K}{K\"ahler}
\DeclareMathOperator{\dvol}{\textit{d}vol}
\numberwithin{equation}{section}
\def\eqref#1{(\ref{#1})}
\newcommand{\C}{{\mathbb C}}
\newcommand{\R}{{\mathbb R}}
\def\1{\sqrt{-1}\:}
\newcommand{\cntrct}                
{\hspace{2pt}\raisebox{1pt}{\text{$\lrcorner$}}\hspace{2pt}}
\renewcommand{\dim}{\operatorname{dim}}
\renewcommand{\Re}{\operatorname{Re}}
\newcommand{\ie}{{\em i.e. }}
\newcommand{\eg}{{\em {\em e.g.} }}
\newcounter{Mycounter}[section]
\newcounter{lemma}[section]
\newcounter{claim}[section]
\newcounter{sublemma}[section]
\newcounter{corollary}[section]
\newcounter{theorem}[section]
\newcounter{conjecture}[section]
\newcounter{proposition}[section]
\newcounter{definition}[section]
\newcounter{example}[section]
\newcounter{remark}[section]
\newcounter{problem}[section]
\newcounter{question}[section]
\def\blacksquare{\hbox{\vrule width 5pt height 5pt depth 0pt}}
\tikzset{join/.code=\tikzset{after node path={%
			\ifx\tikzchainprevious\pgfutil@empty\else(\tikzchainprevious)%
			edge[every join]#1(\tikzchaincurrent)\fi}}}
\def\blfootnote{\gdef\@thefnmark{}\@footnotetext}
\tikzset{>=stealth',every on chain/.append style={join},
	every join/.style={->}}
\dedicatory{
	Dedicated to Paul Gauduchon for his 75th birthday.\\
	Bon anniversaire!
}
\begin{document}

	\title[Compatibility between non-K\" ahler structures]{Compatibility between non-K\" ahler structures on complex (nil)manifolds}
	
	\author{Liviu Ornea}
	\address{Liviu Ornea \newline
		\textsc{\indent University of Bucharest, Faculty of Mathematics and Computer Science \newline
			\indent 14 Academiei Str., Bucharest, Romania}\newline
		\indent	{\em and}\newline
		\indent Institute of Mathematics ``Simion Stoilow'' of the Romanian Academy\newline 
		\indent 21 Calea Grivitei Street, 010702, Bucharest, Romania}
	\email{lornea@fmi.unibuc.ro, liviu.ornea@imar.ro}
	
	\author{Alexandra Otiman}
	\address{Alexandra Otiman \newline 
		\indent Dipartimento di Matematica e Informatica ``Ulisse Dini", \newline
		\indent Universit\' a degli studi di Firenze, \newline
		\indent viale Morgagni 67/A, 50134, Firenze, Italy \newline
		\indent	{\em and}\newline
		\indent Institute of Mathematics ``Simion Stoilow'' of the Romanian Academy\newline 
		\indent 21 Calea Grivitei Street, 010702, Bucharest, Romania}
	\email{alexandraiulia.otiman@unifi.it, alexandra.otiman@imar.ro}
	
	\author{Miron Stanciu}
	\address{Miron Stanciu\newline 
		\textsc{\indent University of Bucharest, Faculty of Mathematics and Computer Science \newline
			\indent 14 Academiei Str., Bucharest, Romania}\newline
		\indent	{\em and}\newline
		\indent Institute of Mathematics ``Simion Stoilow'' of the Romanian Academy\newline 
		\indent 21 Calea Grivitei Street, 010702, Bucharest, Romania}
	\email{miron.stanciu@fmi.unibuc.ro; miron.stanciu@imar.ro}

	\begin{abstract}
		We study the interplay between the following types of special non-K\" ahler Hermitian metrics on compact complex manifolds: {\it locally conformally K\" ahler}, {\it $k$-Gauduchon}, {\it balanced} and {\it locally conformally balanced} and prove that a locally conformally K\" ahler compact nilmanifold carrying a balanced or a left-invariant $k$-Gauduchon metric is necessarily a torus. Combined with the main result in \cite{fv16}, this leads to the fact that a compact complex 2-step nilmanifold endowed with whichever  two of the following types of metrics: balanced, pluriclosed and locally conformally K\" ahler is a torus. Moreover, we construct a family of compact nilmanifolds in any dimension carrying both balanced and locally conformally balanced metrics and finally we show a compact complex nilmanifold does not support a left-invariant locally conformally hyperK\" ahler structure.
		
		\hfill
		
		\noindent \textsc{2010 Mathematics Subject Classification:} 32J27, 53C55, 53C30.
		
		\noindent \textsc{Keywords:} nilmanifold, complex structure, Hermitian metric, balanced, locally conformally K\"ahler, pluriclosed, $k$-Gauduchon. 
	\end{abstract}
	
	\maketitle

\section{Introduction}

The first step towards understanding the non-Kähler world is to study classes of complex manifolds that admit interesting Hermitian metrics which are close to being Kähler from the cohomological and conformal point of view. One way of doing this is to impose several conditions on the fundamental form of the metric which generalize the notion of being closed. In this paper we shall consider the following special Hermitian metrics:

\begin{definition}\label{def1}
\begin{enumerate}

\item {\it Locally conformally Kähler}, widely studied since I. Vaisman's paper  \cite{vai76}. A Hermitian metric $g$ is called {\it locally conformally K\"ahler (lcK)} if there exists a covering $(U_i)_i$ of the manifold and smooth functions $f_i$ on each $U_i$ such that $e^{-f_i}g$ is K\" ahler. The definition is conformally invariant and is equivalent to the existence of a closed one-form $\theta$ (called the {\em Lee form} such that the fundamental form $\omega$ of the metric $g$ satisfies $d\omega=\theta \wedge \omega$. If $\theta$ is exact, the metric is called {\em globally conformally K\"ahler (gcK)}, otherwise we shall call it {\it strictly lcK}. If the Lee form is parallel w.r.t. the Levi-Civita connection of $g$, the lcK metric is called {\em Vaisman}.

\item {\it  Pluriclosed}, also referred to in the literature as {\em strongly K\"ahler with torsion}.  A Hermitian metric $g$ is called {\it pluriclosed} if its fundamental form $\omega$ satisfies $dd^c \omega=0$. See {\em e.g.}  \cite{bis89}.

\item {\it  k-Gauduchon}\footnote{Obviously, a special attention should be paid to $k=75$.}. A Hermitian metric $g$ is called {\it k-Gauduchon} if its fundamental form $\omega$ satisfies $dd^c\omega^{k} \wedge \omega^{n-k-1}=0$. They were first introduced in \cite{fww13} and they generalize the notion of {\it Gauduchon metric}, which corresponds precisely to $k=n-1$. The latter exists and is unique in any conformal class on a compact manifold, up to multiplication by positive constants, due to a celebrated result of Gauduchon. Among the special cases of $k$-Gauduchon metrics we have pluriclosed (1-Gauduchon) and {\it astheno-K\" ahler} (\ie $dd^c\omega^{n-2}$=0, hence, $(n-2)$-Gauduchon).

\item {\it Balanced}, (or {\em semi-K\"ahler}). A Hermitian metric $g$ is called {\it balanced} if its fundamental form $\omega$ satisfies $d\omega^{n-1}=0$, or equivalently if $\omega$ is co-closed. See  {\em e.g.} M.L. Michelson's paper \cite{m82}

\item {\it  Locally conformally balanced}. A Hermitian metric $g$ is called {\it locally conformally balanced} (lcb) if the Lee form $\theta$ of its fundamental form $\omega$ is closed. LcK metrics are in particular lcb. 

\end{enumerate}
\end{definition}

\medskip

The definitions above generalize the \K \ condition ($d \omega = 0$), and we shall be interested in these metrics precisely when this is not satisfied. For all these Hermitian metrics examples are abundant, especially on compact manifolds. In general, there are few (topological) restrictions for the existence of such metrics. An interesting problem then arises: can  these metrics  co-exist on the same manifold? This translates into the following questions:

\medskip

\begin{question}\label{q1} Given a complex manifold, is it possible that a Hermitian metric on it belong to two of the above classes? If the answer is positive, does such a metric have any additional property?
\end{question}

\medskip
	
\begin{question}\label{q2} Given a complex manifold, can it support two different Hermitian metrics belonging to different classes above?
\end{question}

\medskip

Only partial answers were given up to now and only for compact manifolds. In chronological order, the first result of this type concerns \ref{q2} and states the incompatibility between a K\" ahler metric and a strictly lcK metric on a compact complex manifold with fixed complex structure:

\begin{theorem}\textnormal{(\cite[Theorem 2.1]{vai80})} Let $(M, J)$ be a compact complex manifold admitting a K\" ahler metric. Then any lcK metric is gcK.
\end{theorem}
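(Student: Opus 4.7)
The plan is to show the Lee form $\theta$ is exact, i.e.\ $[\theta]=0$ in $H^1(M,\mathbb{R})$, by a Hodge-theoretic reduction followed by a single integration by parts against powers of the lcK form. Since $\theta$ is closed and $(M,\omega_K)$ is compact K\"ahler, Hodge decomposition with respect to $\omega_K$ gives $\theta = \theta_h + dg$ with $\theta_h$ harmonic and $g \in C^\infty(M)$. Because the lcK property is conformally invariant and the Lee form of $e^{-g}\omega$ is exactly $\theta - dg$, I may replace $\omega$ by $e^{-g}\omega$ and assume $\theta$ itself is $\omega_K$-harmonic. On a compact K\"ahler manifold a harmonic real $1$-form decomposes as $\theta = \alpha + \bar\alpha$ with $\alpha \in \Omega^{1,0}(M)$ \emph{holomorphic}, and such forms are automatically $d$-closed, so both $d\alpha = 0$ and $d\bar\alpha = 0$.

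Next, I would apply Stokes' theorem to $\bar\alpha \wedge \omega^{n-1}$. From $d\omega = \theta \wedge \omega$ one deduces $d(\omega^{n-1}) = (n-1)\,\theta \wedge \omega^{n-1}$; combined with $d\bar\alpha = 0$ and $\bar\alpha \wedge \bar\alpha = 0$, this yields
\[
 d(\bar\alpha \wedge \omega^{n-1}) \;=\; -(n-1)\,\bar\alpha \wedge \theta \wedge \omega^{n-1} \;=\; (n-1)\,\alpha \wedge \bar\alpha \wedge \omega^{n-1}.
\]
Stokes' theorem on the compact $M$ then gives $\int_M \alpha \wedge \bar\alpha \wedge \omega^{n-1} = 0$.

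Finally, I would invoke positivity: $i\alpha \wedge \bar\alpha$ is a pointwise non-negative (rank-one) Hermitian $(1,1)$-form, and $\omega^{n-1}$ is a positive $(n-1,n-1)$-form, so $i\alpha \wedge \bar\alpha \wedge \omega^{n-1}$ is a non-negative multiple of the volume form $\omega^n$. Vanishing of its integral forces $\alpha \equiv 0$ pointwise, hence $\theta_h = 0$ and $\theta = dg$ is exact, i.e.\ $\omega$ is gcK. I do not foresee any genuine obstacle in this argument; the only mild subtlety is keeping the roles of the two metrics straight: $\omega_K$ is used only to get Hodge theory on $1$-forms, while the integration by parts is performed against the lcK form $\omega$, with no further compatibility between the two metrics needed.
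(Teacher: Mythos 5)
Your proof is correct and is essentially the standard argument for Vaisman's theorem, which the paper only cites (to \cite{vai80}) without reproving: the conformal normalization to a harmonic Lee form, followed by Stokes applied to $\bar\alpha\wedge\omega^{n-1}$ and the pointwise non-negativity of $\mathrm{i}\,\alpha\wedge\bar\alpha\wedge\omega^{n-1}$, is exactly the integration-by-parts-plus-positivity device the paper itself uses in real form (via $0=\int_M dJ\theta\wedge\omega^{n-1}=-(n-1)!\int_M\|\theta\|^2\,d\mathrm{vol}$) in its proposition on $\mathfrak{h}\cap\mathfrak{lee}$. All steps check out, including the decomposition $\theta=\alpha+\bar\alpha$ with $\alpha$ holomorphic hence closed on a compact K\"ahler manifold, so there is no gap.
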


\medskip

A second result gave an answer for \ref{q1} and it addressed the compatibility in a given conformal class:

\begin{theorem}\textnormal{(\cite[Theorem 1.3, Proposition 3.8]{ip}, see also \cite{ai03})} Let $(M, J)$ be a compact complex manifold. Then for any Hermitian metric, the conditions lcK and $k$-Gauduchon, respectively balanced and $k$-Gauduchon are mutually incompatible in a given conformal class.
	
\end{theorem}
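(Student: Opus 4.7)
The strategy for both incompatibility statements is to show that any $\omega$ simultaneously satisfying the two conditions must in fact be K\"ahler---so an lcK conformal class admitting a $k$-Gauduchon representative is automatically gcK, and no metric can be simultaneously balanced and $k$-Gauduchon. The common starting point is the Stokes identity on the compact manifold $M$:
\[
\int_M dd^c\omega^k\wedge\omega^{n-k-1}\;=\;\int_M d^c\omega^k\wedge d\omega^{n-k-1},
\]
obtained by integrating the exact form $d(d^c\omega^k\wedge\omega^{n-k-1})$. The $k$-Gauduchon hypothesis makes the left-hand side vanish pointwise, so the right-hand side must also vanish; under each of the two extra hypotheses one rewrites this right-hand side as a fixed-sign multiple of a manifestly non-negative integral, forcing the obstruction to being K\"ahler to disappear.

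\emph{lcK case.} Since lcK is conformally invariant, we may take $\omega$ itself to be the $k$-Gauduchon representative of the class, so that $d\omega=\theta\wedge\omega$ for a closed Lee form $\theta$ and consequently $d\omega^m=m\theta\wedge\omega^m$. A bidegree split $\theta=\theta^{1,0}+\theta^{0,1}$ gives $\partial\omega=\theta^{1,0}\wedge\omega$ and $\bar\partial\omega=\theta^{0,1}\wedge\omega$, whence $d^c\omega^m$ equals a universal nonzero constant times $m(J\theta)\wedge\omega^m$. Substituting,
\[
0\;=\;c\,k(n-k-1)\int_M J\theta\wedge\theta\wedge\omega^{n-1},
\]
and a direct check in a $J$-adapted orthonormal frame shows $J\theta\wedge\theta\wedge\omega^{n-1}$ is a fixed-sign multiple of $|\theta|^2\omega^n$. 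For $1\le k\le n-2$ this forces $\theta\equiv 0$, so $\omega$ is K\"ahler.

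\emph{Balanced case.} Now $d\omega^{n-1}=0$ amounts to the vanishing of the Lee form of $\omega$, which via the Lefschetz decomposition of $d\omega\in\Lambda^3$ is equivalent to $\Lambda\partial\omega=\Lambda\bar\partial\omega=0$; that is, $\partial\omega\in\Lambda^{2,1}$ is primitive. Substituting $d\omega^{n-k-1}=(n-k-1)\omega^{n-k-2}\wedge d\omega$ and $d^c\omega^k=k\omega^{k-1}\wedge d^c\omega$ into the Stokes identity yields
\[
0\;=\;k(n-k-1)\int_M d^c\omega\wedge d\omega\wedge\omega^{n-3}.
\]
A bidegree computation gives $d^c\omega\wedge d\omega=-2i\,\partial\omega\wedge\bar\partial\omega$, while the Hodge--Weil identity for primitive $(2,1)$-forms yields $\partial\omega\wedge\bar\partial\omega\wedge\omega^{n-3}=i(n-3)!\,|\partial\omega|^2\dvol$. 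The integrand is therefore a positive multiple of $|\partial\omega|^2\dvol$, and for $1\le k\le n-2$ we conclude $\partial\omega=0$, so $\omega$ is K\"ahler.

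The only delicate step is the Hodge--Weil constant/sign bookkeeping on primitive $(2,1)$-forms in the balanced part; the rest reduces to Stokes' theorem, the Leibniz rule and linear algebra in a unitary frame. The case $k=n-1$ is genuinely excluded by the vanishing prefactor $(n-k-1)$, and is consistent with Gauduchon's theorem, which guarantees an $(n-1)$-Gauduchon representative in every conformal class, so no incompatibility can hold there.
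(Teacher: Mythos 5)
This theorem is quoted from Ivanov--Papadopoulos \cite{ip} and the paper offers no proof of it, so there is no in-house argument to compare against; the closest internal material is Proposition 3.3, whose identities $dd^c\omega^k\wedge\omega^{n-k-1}=k\,dJd\omega\wedge\omega^{n-2}-k(k-1)Jd\omega\wedge d\omega\wedge\omega^{n-3}$ and $\int_M dd^c\omega\wedge\omega^{n-2}=0$ are exactly the kind of Stokes/Leibniz bookkeeping you use. Your computations check out in the paper's conventions: $d^c\omega^k=k(J\theta)\wedge\omega^k$ in the lcK case, $J\theta\wedge\theta\wedge\omega^{n-1}=-(n-1)!\,\|\theta\|^2\dvol$ (the same identity the paper invokes in Proposition 4.3 via \cite[Eq.\ (46)]{gau84}), and in the balanced case the primitivity of $\partial\omega$ together with the Hodge--Weil formula $\star\alpha=i\,\omega^{n-3}\wedge\alpha/(n-3)!$ for primitive $(2,1)$-forms gives the stated positive-definite integrand. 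The restriction to $1\le k\le n-2$ is correctly identified as essential.

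The one substantive caveat concerns the scope of the balanced half. Your argument proves that a \emph{single} metric cannot be both balanced and $k$-Gauduchon without being K\"ahler; for lcK this automatically upgrades to the whole conformal class because lcK is conformally invariant, and you say so. But balanced is \emph{not} conformally invariant, so "incompatible in a given conformal class" read literally (a conformal class containing a non-K\"ahler balanced metric contains no $k$-Gauduchon metric, which is what \cite{ip} actually prove) is not reached by your computation: if $\omega$ is balanced and $e^f\omega$ is $k$-Gauduchon, the integration by parts of $dd^c(e^{kf}\omega^k)\wedge e^{(n-k-1)f}\omega^{n-k-1}$ produces cross terms in $df$ and $d^cf$ that do not assemble into a sum of squares. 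Bridging this requires the Fu--Wang--Wu invariant $\gamma_k$, whose \emph{sign} is a conformal invariant and vanishes exactly when the class contains a $k$-Gauduchon metric; one then shows balanced non-K\"ahler forces $\gamma_k>0$ (and strict lcK forces $\gamma_k<0$). If the intended reading is only the single-metric incompatibility (Question 1.7 of the paper), your proof is complete; otherwise the balanced case needs this extra input.
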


\medskip

Finally, the only answers for \ref{q2} we are aware of refer either to the class of nilmanifolds or are a generalization of Vaisman's result. 

\begin{theorem}\textnormal{(\cite[Theorem 1.1]{fv16}, \cite[Theorem A]{fv19})} If a compact nilmanifold $\Gamma \backslash G$ endowed with a left-invariant structure $J$ such that $[\mathfrak{g}, \mathfrak{g}]+ J[\mathfrak{g}, \mathfrak{g}]$ is abelian carries both a pluriclosed and a balanced metric, then it is necessarily a complex torus.
\end{theorem}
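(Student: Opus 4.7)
The plan is to reduce to the Lie algebra level via symmetrization and then combine the two metric conditions through a Stokes-type argument inside the Chevalley--Eilenberg complex of $\mathfrak{g}$.

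First, I would invoke Belgun's symmetrization trick: on a compact nilmanifold $\Gamma \backslash G$ with left-invariant complex structure $J$, averaging any Hermitian form via the bi-invariant Haar measure of $G$ produces a left-invariant Hermitian form, and this procedure preserves conditions expressible through $d\omega^k \wedge \omega^l = 0$, such as balanced and pluriclosed. Thus we may assume that the balanced form $\omega_b$ and the pluriclosed form $\omega_p$ are both left-invariant and carry out the analysis on $\Lambda^\ast \mathfrak{g}^\ast$. The hypothesis that $\mathfrak{h} := [\mathfrak{g},\mathfrak{g}] + J[\mathfrak{g},\mathfrak{g}]$ is abelian makes $\mathfrak{h}$ an abelian $J$-invariant ideal containing the derived algebra, so $\mathfrak{g}$ is at most $2$-step nilpotent. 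Picking a $J$-invariant complement $V$ of $\mathfrak{h}$ and a unitary basis of $(1,0)$-forms adapted to the splitting $\mathfrak{g} = V \oplus \mathfrak{h}$, the structure equations read $d\alpha = 0$ for $\alpha \in V^\ast$ and $d\beta \in \Lambda^2 V^\ast$ for $\beta \in \mathfrak{h}^\ast$, so the only non-trivial bracket lives in $\Lambda^2 V \to \mathfrak{h}$.

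Next, I would exploit both hypotheses through a cross integration by parts. Starting from $dd^c \omega_p = 0$ and applying Stokes' theorem yields
\[
0 \;=\; \int_M dd^c \omega_p \wedge \omega_b^{n-2} \;=\; \int_M d^c\omega_p \wedge d\omega_b^{n-2}.
\]
The idea is then to expand $d\omega_b^{n-2}$ by means of the balanced identity $\omega_b^{n-2} \wedge d\omega_b = 0$ (equivalent to $d\omega_b^{n-1}=0$) combined with a Lefschetz decomposition taken with respect to $\omega_p$ (or a convex combination of $\omega_b$ and $\omega_p$), so as to rearrange the right-hand side into an integral of a manifestly non-negative quantity of the form
\[
\int_M c \, \|T\|^2 \, \mathrm{vol} \;=\; 0,
\]
where $T$ encodes the bracket $\Lambda^2 V \to \mathfrak{h}$ and $c > 0$ reflects the positive pairing between $\omega_b$ and $\omega_p$ at each point. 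Vanishing of the integrand then forces $T \equiv 0$, i.e.\ $[\mathfrak{g},\mathfrak{g}] = 0$, so $\mathfrak{g}$ is abelian and $\Gamma \backslash G$ is a complex torus.

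The main obstacle is the previous step: one must choose the precise linear combination of $dd^c$-type identities so that the abelianness of $\mathfrak{h}$ kills all secondary bracket contributions while the balanced equation lets us integrate by parts without losing control, leaving a clean sum of squares. Without the structural hypothesis on $J$, extra non-zero cross-terms involving $[\mathfrak{h},\mathfrak{h}]$-type brackets would appear and positivity would fail; it is precisely here that the assumption on $[\mathfrak{g},\mathfrak{g}] + J[\mathfrak{g},\mathfrak{g}]$ being abelian is essential, since it trivializes exactly the terms that would otherwise obstruct the sign-definiteness argument.
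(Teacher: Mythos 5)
First, a caveat: the paper does not prove this statement---it is quoted from Fino--Vezzoni (\cite{fv16}, as corrected in \cite{fv19})---so there is no internal proof to compare against, and I can only judge your argument on its own terms. Your opening reduction (averaging \`a la Belgun to make $\omega_b$ and $\omega_p$ left-invariant, which preserves both conditions) is correct and standard. But the structural claim that follows is already a genuine gap: the hypothesis that $\mathfrak{h}=[\mathfrak{g},\mathfrak{g}]+J[\mathfrak{g},\mathfrak{g}]$ is \emph{abelian} does not make it \emph{central}, hence does not force $\mathfrak{g}$ to be $2$-step nilpotent, nor does it give $d\beta\in\Lambda^2 V^*$ for $\beta\in\mathfrak{h}^*$. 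Indeed, $d\beta(H,X)=-\beta([H,X])$ with $[H,X]\in[\mathfrak{h},\mathfrak{g}]\subseteq[\mathfrak{g},\mathfrak{g}]\subseteq\mathfrak{h}$, and $[\mathfrak{h},\mathfrak{g}]=0$ is strictly stronger than $[\mathfrak{h},\mathfrak{h}]=0$; a $3$-step nilpotent algebra can perfectly well have abelian, non-central $\mathfrak{h}$. The clean splitting you use is what \ref{prop:2step} of this paper provides under the \emph{stronger} hypothesis of a $2$-step algebra with $J$-invariant center; handling the merely-abelian case is precisely the point of the corrected hypothesis in \cite{fv19}, so you cannot assume it away.

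Second, the analytic core is missing. The identity $\int_M dd^c\omega_p\wedge\omega_b^{n-2}=0$ is vacuous: $dd^c\omega_p=0$ pointwise by hypothesis, so the integral carries no information, and the balanced condition $d\omega_b^{n-1}=0$ gives you no control over $d\omega_b^{n-2}$. Everything after that---the Lefschetz decomposition with respect to $\omega_p$ or a convex combination, the rearrangement into $\int_M c\,\|T\|^2\,\mathrm{dvol}$ with $c>0$---is a description of what a proof would need to achieve rather than a proof; you never exhibit the sum-of-squares identity, never identify $T$ or $c$, and you concede that this step is ``the main obstacle.'' Since that positivity identity \emph{is} the theorem, the proposal is not a proof. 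For what it is worth, the right way to make your cross-pairing idea work is to move both derivatives onto the balanced factor, i.e.\ use $0=\int_M dd^c\omega_p\wedge\omega_b^{n-2}=\int_M \omega_p\wedge dd^c\bigl(\omega_b^{n-2}\bigr)$ and then show, by an explicit computation on an adapted coframe using the abelianity of $\mathfrak{h}$, that $dd^c\bigl(\omega_b^{n-2}\bigr)$ is a semi-definite $(n-1,n-1)$-form which vanishes only when $\mathfrak{g}$ is abelian, so that pairing with the positive form $\omega_p$ yields the contradiction. That computation---the analogue of what this paper carries out in \ref{thm:lckkG} via \eqref{eq:lckkG1}---is entirely absent from your argument.
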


\begin{theorem}\textnormal{(\cite[Theorem 2.5]{au05})} On a compact complex manifold endowed with a \K \ metric $\omega$, any lcb metric with respect to the same complex structure is in fact gcb. 
\end{theorem}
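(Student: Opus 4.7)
My approach mirrors Vaisman's original argument for the lcK versus K\"ahler case, with $\omega^{n-1}$ replacing $\omega$ throughout. Denote the K\"ahler metric by $\omega_K$ and let $\omega$ be a second Hermitian metric on $(M,J)$ which is lcb, with closed Lee form $\theta$ characterized by $d\omega^{n-1} = \theta \wedge \omega^{n-1}$. I aim to show $[\theta] = 0$ in $H^1_{dR}(M,\R)$.

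I would first normalize $\theta$ by Hodge theory on $(M,J,\omega_K)$. The K\"ahler hypothesis gives $H^1(M,\C) = H^{1,0}(M) \oplus H^{0,1}(M)$, where every class in $H^{1,0}(M)$ is represented by a holomorphic $(1,0)$-form that is automatically $d$-closed (via the K\"ahler identities). Since $\theta$ is real and closed, this yields $\theta = \alpha + \bar\alpha + df$ with $\alpha$ a closed holomorphic $(1,0)$-form and $f \in C^\infty(M,\R)$. A short calculation shows that the conformal rescaling $\omega' := e^{-f/(n-1)}\omega$ satisfies $(\omega')^{n-1} = e^{-f}\omega^{n-1}$, hence is still lcb with Lee form $\theta' = \theta - df = \alpha + \bar\alpha$. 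Since gcb is a conformally invariant property, it suffices to prove $\omega'$ is balanced, i.e.\ that $\alpha = 0$.

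The main step is a Stokes computation. Using $d\alpha = 0$, $\alpha \wedge \alpha = 0$, and $d(\omega')^{n-1} = (\alpha + \bar\alpha)\wedge (\omega')^{n-1}$, I obtain
\[
d\bigl(\alpha \wedge (\omega')^{n-1}\bigr) \;=\; -\alpha \wedge \bar\alpha \wedge (\omega')^{n-1}.
\]
Stokes' theorem on compact $M$ then yields $\int_M \alpha \wedge \bar\alpha \wedge (\omega')^{n-1} = 0$. On the other hand, the pointwise identity $\ri\, \alpha \wedge \bar\alpha \wedge (\omega')^{n-1} = \tfrac{1}{n}\,|\alpha|^2_{\omega'}\,(\omega')^n$, valid for any $(1,0)$-form, shows the integrand is a non-negative multiple of the volume form. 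Combined with the vanishing integral this forces $\alpha \equiv 0$, so $\theta = df$ is exact and $\omega$ is gcb.

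The only delicate point is that the first step uses the K\"ahler assumption twice: once for the Hodge decomposition $H^1 = H^{1,0} \oplus H^{0,1}$, and once to upgrade ``$\delb$-closed'' to ``$d$-closed'' for the holomorphic representative $\alpha$. Without the K\"ahler metric neither would be available. Somewhat curiously, $\omega_K$ itself does not appear anywhere after the normalization, and the argument concludes entirely in terms of the rescaled lcb metric $\omega'$.
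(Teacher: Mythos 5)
Your proof is correct, and it is essentially the standard (Vaisman-type) argument; the paper itself only quotes this theorem from \cite{au05} without reproving it, but your route coincides with the computation the paper does carry out elsewhere (in the proposition on $\mathfrak{h}\cap\mathfrak{lee}$): writing the harmonic representative of $[\theta]$ as $\alpha+\bar\alpha$ with $\alpha$ a closed holomorphic form is the same as observing that $J\theta_0$ is closed for the harmonic representative $\theta_0$ on a compact K\"ahler manifold, and your identity $\int_M \alpha\wedge\bar\alpha\wedge(\omega')^{n-1}=0$ is, up to a constant, the integration by parts $0=\int_M dJ\theta_0\wedge(\omega')^{n-1}=\int_M J\theta_0\wedge\theta_0\wedge(\omega')^{n-1}$ combined with the pointwise positivity $\ri\,\alpha\wedge\bar\alpha\wedge(\omega')^{n-1}=\tfrac{1}{n}|\alpha|^2_{\omega'}(\omega')^{n}$. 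All steps (the Hodge-theoretic normalization, the conformal invariance of gcb, and the Stokes argument) check out.
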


\medskip

The aim of this paper is to give further partial answers to \ref{q1} and \ref{q2}, by checking the compatibility between two metrics of special type on a compact complex manifold within the class of compact nilmanifolds, endowed with a left-invariant complex structure, as they are a good ground for gaining more evidence towards formulating conjectures.

Section \ref{preli} presents the necessary background on nilmanifolds. In particular, it contains a short proof of the characterization of  left-invariant complex structures $J$ on 2-step nilmanifolds with $J$-invariant center. Section \ref{same_metric} discusses the $k$-Gauduchon condition  on compact manifolds. On the way, we obtain some formulae which  will be essential in the proof of the results on nilmanifolds. Section \ref{nila} is devoted to complex nilmanifolds and contains our main results. It starts with Subsection \ref{formalee} comprising results referring to the set of possible Lee forms on complex nilmanifolds, including a technical lemma giving the explicit formula for the Lee form of any Hermitian metric on 2-step complex nilmanifolds with left-invariant structures and complex invariant center. In Subsection \ref{compa} we give two constructions of complex nilmanifolds, the first one a family carrying an lcb metric which is also pluriclosed, the second one a family endowed with both balanced and strictly lcb metrics. Finally, in Subsection \ref{incompa} we show that  compact complex nilmanifolds endowed with (a) both a balanced and an lcK metric, or (b) both a left-invariant $k$-Gauduchon and an lcK metric, or (c) a pluriclosed and an lcK metric, are necessarily complex tori. We end by showing that a compact complex nilmanifold cannot be endowed with a left-invariant locally conformally hyperK\" ahler structure.

\medskip

\noindent{\bf Conventions:} Throughout the paper we shall use the conventions from \cite[(2.1)]{bes87} for the complex structure $J$ acting on complex forms on a manifold $(M, J)$. Namely:
\begin{itemize}
	\item $J\alpha=\mathrm{i}^{q-p} \alpha$, for any $\alpha \in \Lambda^{p, q}_{\mathbb{C}}M$, or equivalently, $J\eta(X_1, \ldots, X_p) = (-1)^p\eta(JX_1, \ldots, JX_p)$;
	\item the fundamental form of a Hermitian metric is given by $\omega (X, Y): = g(JX, Y)$;
	\item the operator $d^c$ is defined as $d^c := -J^{-1}dJ$, where $J^{-1} = (-1)^{\deg \alpha} J$. 
	\item The Lee form of a Hermitian metric $\omega$ is the trace of the torsion of the Chern connection associated to $\omega$, or equivalently given by $\theta=Jd^*\omega$, or by $d\omega^{n-1}=\theta \wedge \omega^{n-1}$ (see \cite[Eq. (15), (16)]{gau84}).
\end{itemize}

\section{Preliminaries}\label{preli}

This section is devoted to reviewing several basic facts that we shall use repeatedly in this paper.

\begin{definition} A {\it nilmanifold} $\Gamma \backslash G$ is a compact quotient of a simply connected nilpotent Lie group $G$ by a discrete subgroup $\Gamma$.

\end{definition}

\begin{definition} For a nilpotent Lie algebra $\mathfrak{g}$ the following descending series 
\begin{equation}
\mathcal{C}^0\mathfrak{g}:=\mathfrak{g}, \ \ \mathcal{C}^{i+1}\mathfrak{g}:=[\mathcal{C}^i\mathfrak{g}, \mathfrak{g}], \ \ i \geq 0
\end{equation}
is called the {\it derived series} and there exists $k\geq 1$ such that $\mathcal{C}^{l}\mathfrak{g}=0$, for any $l \geq k$. We call $k$ the {\it nilpotency step} of $\mathfrak{g}$ if $\mathcal{C}^{k-1}\mathfrak{g} \neq 0$.
\end{definition}

\medskip

\begin{definition}
	A differential object on $M$ which is induced by projection from a left-invariant differential object on $G$ is called {\em left-invariant}.	
\end{definition}

\begin{remark}\label{left_right}
We stress that the above is just a convenient name for the objects on $M$ induced form $G$: since $G$ has a natural right action on $M=\Gamma \backslash G$, but (in general) not a left one, these {\em left-invariant} objects on $M$ are are not invariant for the $G$-action.	
\end{remark}

\begin{definition} A complex structure $J$ on a Lie algebra $\mathfrak{g}$ is an endomorphism of $\mathfrak{g}$ with $J^2=-\mathrm{id}_{\mathfrak{g}}$, satisfying
\begin{equation}\label{integrabil}
J[X, Y]-[JX, Y]-[X, JY]-J[JX, JY]=0,
\end{equation}
for any $X, Y \in \mathfrak{g}$.
\end{definition}

\medskip

We note here that \eqref{integrabil} is equivalent to the fact that $\mathfrak{g}^{1,0}$, the $\mathrm{i}$-eigenspace of $J$ seen as as a space of endomorphism of $\mathfrak{g}^{\mathbb{C}}$, is a complex subalgebra. Defining $J$ on the tangent bundle $TG$ by means of left multiplications, we endow $G$ with a complex structure, left-invariant by construction. However, this does not make $G$ a complex Lie group, since $J$ is not necessarily right-invariant. If this was the case, $J$ would be a {\it bi-invariant} structure and $\mathfrak{g}$ would be a complex Lie algebra, hence the Lie bracket would be $\mathbb{C}$-linear, meaning 
\begin{equation}\label{c-liniar}
J[X, Y]=[JX, Y].
\end{equation}
Equation \eqref{c-liniar} clearly implies \eqref{integrabil}.

\medskip

Unlike complex structures, there is a bi-invariant object whose existence is always granted on compact nilmanifolds by the following result: 

\begin{theorem} {\rm(\cite[Lemma 6.2]{Mil})}
Any simply connected Lie group which admits a discrete subgroup with compact quotient is endowed with a bi-invariant volume form (that we denote $d\mathrm{vol}$).
\end{theorem}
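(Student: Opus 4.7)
The plan is to use the classical characterization that a Lie group $G$ admits a bi-invariant volume form if and only if it is unimodular, i.e.\ its modular function $\Delta\colon G \to \mathbb{R}_{>0}$ is identically $1$. So the strategy is to show that the existence of a cocompact discrete subgroup forces $G$ to be unimodular; the resulting bi-invariant form on $G$ then automatically descends to $M = \Gamma\backslash G$.

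Concretely, I would pick any left-invariant volume form $\omega$ on $G$ (which exists because $G$ is parallelized by left-invariant vector fields). Since left and right translations commute, for each $g \in G$ the pullback $R_g^*\omega$ is again left-invariant, hence $R_g^*\omega = \Delta(g)\,\omega$ for a continuous homomorphism $\Delta\colon G \to \mathbb{R}_{>0}$. Because $\omega$ is in particular left $\Gamma$-invariant, it descends to a nowhere-vanishing volume form $\bar\omega$ on $M$, and compactness gives $V := \int_M \bar\omega \in (0,\infty)$. The right action $R_g(\Gamma h) := \Gamma h g$ of $G$ on $M$ is well-defined and commutes with the projection $G \to M$, so $R_g^*\bar\omega = \Delta(g)\,\bar\omega$, and since $R_g$ is a diffeomorphism of $M$,
\begin{equation*}
V = \int_M R_g^*\bar\omega = \Delta(g)\,V.
\end{equation*}
Thus $\Delta(g) = 1$ for every $g \in G$, so $\omega$ itself is bi-invariant on $G$, and its descent yields the desired bi-invariant volume form on $M$.

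There is no serious obstacle: the argument is a single volume-counting step together with the well-definedness of the right $G$-action on the quotient. I would note in passing that in the nilpotent case of actual interest one does not need even this, as $\mathrm{ad}_X$ is nilpotent for every $X \in \mathfrak{g}$, giving $\mathrm{tr}(\mathrm{ad}_X) = 0$ and hence unimodularity for free.
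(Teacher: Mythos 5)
Your argument is correct and is essentially the proof of \cite[Lemma 6.2]{Mil} that the paper cites without reproducing: a left-invariant volume form descends to the compact quotient, the right $G$-action on $\Gamma\backslash G$ rescales it by the modular function, and finiteness of the total volume forces the modular function to be identically $1$. The only point worth making explicit is that $\Delta$ takes values in $\mathbb{R}_{>0}$ (rather than $\mathbb{R}^*$) because $G$ is connected; with that noted, nothing is missing, and your closing remark that nilpotency already gives unimodularity via $\mathrm{tr}(\mathrm{ad}_X)=0$ is also accurate.
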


\medskip

We shall use the bi-invariant volume for proving the existence of several left-invariant objects on nilmanifolds.

\medskip

We claim that a 2-step nilmanifold with $J$-invariant center has very particular structure equations, which we now describe for later use. For the sake of completeness we include a proof of the following result:
\begin{proposition}
	\label{prop:2step}
	A 2-step complex nilmanifold $(\Gamma \backslash G, J)$ with a left-invariant complex structure $J$ and $J$-invariant center has a $(1,0)$ co-frame $\{\alpha_1, ..., \alpha_n \}$ satisfying the structure equations
	\begin{equation}\label{structure2-step}
	\left\{
	\begin{array}{ll}
	d\alpha_i=0,  &1 \leq i \leq k, \\[.1in]
	d\alpha_i= \displaystyle\sum\limits_{r,s=1}^{k} \left( \frac{1}{2} c^i_{rs} \alpha_r \wedge \alpha_s + c^i_{r \overline{s}} \alpha_r \wedge \overline{\alpha}_s \right), &k < i \le n,
	\end{array}
	\right.
	\end{equation}
	for some $1 \le k < n - 1$ and constants $c^i_{rs}, c^i_{r \overline{s}} \in \mathbb{C}$. 
\end{proposition}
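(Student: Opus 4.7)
The plan is to build the coframe from a $J$-invariant decomposition of $\mathfrak{g}$ adapted to its center, and then read the structure equations directly from the Chevalley--Eilenberg identity $d\alpha(X,Y) = -\alpha([X,Y])$. Since $\mathfrak{z}$ is $J$-invariant, I would first pick a $J$-invariant real complement $\mathfrak{u}$ so that $\mathfrak{g} = \mathfrak{u} \oplus \mathfrak{z}$ is a $J$-invariant direct sum; setting $k := \dim_{\mathbb{C}} \mathfrak{u}$, I then choose a $(1,0)$-coframe $\{\alpha_1, \dots, \alpha_n\}$ adapted to this splitting, meaning $\alpha_1, \dots, \alpha_k$ annihilate $\mathfrak{z}^{\mathbb{C}}$ and $\alpha_{k+1}, \dots, \alpha_n$ annihilate $\mathfrak{u}^{\mathbb{C}}$.

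The 2-step hypothesis gives $[\mathfrak{g},\mathfrak{g}] \subseteq \mathfrak{z}$, so Chevalley--Eilenberg immediately yields $d\alpha_i = 0$ for $i \le k$. For $i > k$, two ingredients pin down $d\alpha_i$: integrability of $J$ eliminates any $(0,2)$ component, so $d\alpha_i$ lies in $\Lambda^{2,0} \oplus \Lambda^{1,1}$; and every vector in $\mathfrak{z}^{\mathbb{C}}$ has trivial bracket with anything else, so $d\alpha_i(X, \cdot) = 0$ whenever $X \in \mathfrak{z}^{\mathbb{C}}$. Pairing the expansion of $d\alpha_i$ against the $(1,0)$- or $(0,1)$-vectors dual to $\alpha_p$, $\bar\alpha_p$ with $p > k$ (which lie in $\mathfrak{z}^{\mathbb{C}}$ by construction) then forces every coefficient of a term containing such an $\alpha_p$ or $\bar\alpha_p$ to vanish, leaving exactly the sum over $r, s \le k$ claimed in the proposition.

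For the bounds on $k$, note that $\mathfrak{g}$ being 2-step but not abelian gives $0 \neq [\mathfrak{g}, \mathfrak{g}] \subseteq \mathfrak{z} \subsetneq \mathfrak{g}$, hence $1 \le k \le n - 1$. To promote this to the strict bound $k < n-1$, I would refine the splitting by replacing $\mathfrak{z}$ with the smaller $J$-invariant subspace $\mathfrak{h} := [\mathfrak{g},\mathfrak{g}] + J[\mathfrak{g},\mathfrak{g}]$ (the smallest $J$-invariant subspace of $\mathfrak{z}$ containing $[\mathfrak{g},\mathfrak{g}]$); the same argument then yields $k = n - \dim_{\mathbb{C}} \mathfrak{h}$, which is at most $n-2$ as soon as $\dim_{\mathbb{C}} \mathfrak{h} \ge 2$. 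There is no deep obstacle: once the right $J$-invariant decomposition is in place, everything reduces to Chevalley--Eilenberg bookkeeping, and the only mildly delicate point is securing the sharp range of $k$.
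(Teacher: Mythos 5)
Your argument is essentially the paper's. The paper also splits $\mathfrak{g}^{1,0}=\mathfrak{t}^{1,0}\oplus\mathfrak{c}^{1,0}$ with $\mathfrak{t}=\mathfrak{g}/\mathfrak{c}$ (the quotient playing the role of your $J$-invariant complement $\mathfrak{u}$), takes an adapted basis, and reads the structure equations off the Lie brackets; your dual formulation --- integrability of $J$ killing the $(0,2)$-part of $d\alpha_i$ and centrality of the dual vectors $X_p$, $\overline{X}_p$ for $p>k$ killing every term containing $\alpha_p$ or $\overline{\alpha}_p$ --- is the same computation via Chevalley--Eilenberg. Up to the bound on $k$, the derivation of \eqref{structure2-step} is correct and complete.

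The one place the proposal overreaches is the strict bound $k<n-1$. Replacing $\mathfrak{z}$ by $\mathfrak{h}=[\mathfrak{g},\mathfrak{g}]+J[\mathfrak{g},\mathfrak{g}]$ does still yield the structure equations (since $[\mathfrak{g},\mathfrak{g}]\subseteq\mathfrak{h}\subseteq\mathfrak{z}$), but your conclusion $k\le n-2$ is explicitly conditional on $\dim_{\mathbb{C}}\mathfrak{h}\ge 2$, and that condition can fail: for the Iwasawa manifold ($n=3$, $d\alpha_1=d\alpha_2=0$, $d\alpha_3=\alpha_1\wedge\alpha_2$) the center is $J$-invariant of complex dimension $1$, equals $\mathfrak{h}$, and one gets $k=2=n-1$. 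So the strict inequality cannot be proved by this (or any) refinement --- it is simply not true in general, and only $1\le k\le n-1$ is actually established. To be fair, the paper's own proof sets $k=\dim_{\mathbb{C}}\mathfrak{t}^{1,0}\neq 0$ and never addresses the upper bound either, and nothing in the later applications of \eqref{structure2-step} depends on the strict form; but you should not present the sharp range as a matter of "bookkeeping" when it is false.
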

\begin{proof}
	Since $\Gamma \backslash G$ is 2-step, $[\mathfrak{g}, \mathfrak{g}]$ is included in $\mathfrak{c}:=\{X \in \mathfrak{g} \mid [X, \mathfrak{g}]=0\}$, the center of $\mathfrak{g}$. Therefore, $\mathfrak{t}:=\mathfrak{g}/\mathfrak{c}$ is an abelian Lie algebra and moreover is endowed with a complex structure, since, by hypothesis, $J$ descends to the quotient. Consequently, as vector spaces only, we have the splitting $\mathfrak{g}^{1,0}=\mathfrak{t}^{1,0} \oplus \mathfrak{c}^{1,0}.$
	
	Let now $0 \neq k := \dim_{\mathbb{C}} \mathfrak{t}^{1,0}$ and consider $\{X_1, \ldots, X_n\}$ a basis for $\mathfrak{g}^{1,0}$ such that $\{X_1, \ldots, X_k\}$ is a basis for $\mathfrak{t}^{1,0}$ and $\{X_{k+1}, \ldots, X_n\}$ a basis for $\mathfrak{c}^{1,0}$. Since $\mathfrak{t}^{1,0}$ is an abelian Lie algebra, we have for any $1 \leq r, s \leq k$:
	\begin{equation}\label{ecstruct}
	\begin{split}
	[X_r, X_s] &= -\sum_{i=k+1}^n \tfrac{1}{2}c^i_{rs} X_i\\
	[X_r, \overline{X}_s] &= - \sum_{k+1}^n c^i_{r\overline{s}}X_i + c^i_{s\overline{r}}\overline{X}_i.
	\end{split}
	\end{equation}
	Taking now the dual co-frame $\{\alpha_1, \ldots, \alpha_n\}$ of $\{X_1, \ldots, X_n\}$, we see that \eqref{ecstruct} implies  \eqref{structure2-step}. In particular, this shows that $\Gamma \backslash G$ is a holomorphic principal torus bundle over a real torus of dimension $\mathrm{dim}_{\mathbb{R}}\mathfrak{g}-\mathrm{dim}_{\mathbb{R}}\mathfrak{c}$. 
\end{proof}

\smallskip

\begin{remark}\label{defechivnil}
	Note that, in the above case, the co-frame $\{\alpha_1, \ldots, \alpha_n\}$ of $\mathfrak{g}^{1,0}$ has the property that $d\alpha_i$ belongs to $\Lambda^2\langle \alpha_1, \ldots,  \alpha_{i-1},\overline{\alpha}_1, \ldots,  \overline{\alpha}_{i-1} \rangle$, the ideal generated in $\mathfrak{g}^{\C}$ by $\{\alpha_1, \ldots,  \alpha_{i-1}\}$, for $1 \leq i \leq \mathrm{dim}_{\mathbb{C}} \,\mathfrak{g}^{1,0}$. This is equivalent to $J$ being \textit{nilpotent} (see \cite{cfg86}, \cite[Theorem 2]{cfgu97}), where this is defined as:
\end{remark}

\begin{definition}\label{nilpotentJ} A complex structure $J$ on a nilpotent Lie algebra $\mathfrak{g}$ is called {\it nilpotent} if the ascending series $\{\mathfrak{g}_i^J\}_{i \geq 0}$ given by
\begin{equation}
\mathfrak{g}_0^J=0, \ \ \ \ \ 
\mathfrak{g}_i^J=\{X \in \mathfrak{g} \mid [J^kX, \mathfrak{g}] \subseteq \mathfrak{g}_{i-1}^J,\ \ k \in \{0, 1\} \}
\end{equation}
has the property that $\mathfrak{g}_l^J = \mathfrak{g}$, for some $l>0$.
\end{definition}

\medskip

\begin{remark}\label{bi-invabelian} When the complex structure $J$ is bi-invariant, then $c^i_{r\overline{s}} =0$, for any $1 \leq r, s \leq k$ and $k+1 \leq i \leq n$. Indeed, by using \eqref{c-liniar}, we can easily see by direct computation that $d\alpha_i(X-\mathrm{i}JX, Y+\mathrm{i}JY)=0$, for any real left-invariant vector fields $X$ and $Y$. As a matter of fact, this argument shows that in general, on a nilmanifold endowed with a bi-invariant structure,  $d(\Lambda^{1,0}) \subseteq \Lambda^{2,0}$. Moreover, in this case, $J$ is automatically nilpotent by \ref{nilpotentJ} and \eqref{c-liniar}, and there exists a co-frame $\{\alpha_1, \ldots, \alpha_n\}$ such that $d\alpha_i \in \langle \alpha_1, \ldots, \alpha_{i-1} \rangle$ (see \ref{defechivnil}).  If the complex structure $J$ is {\it abelian}, meaning that $[JX, JY]=[X, Y]$, for any $X, Y \in \mathfrak{g}$, then again by straightforward computation, we obtain $c^{i}_{rs}=0$. We conclude that $J$ cannot be abelian and bi-invariant at the same time, unless $\mathfrak{g}$ is a complex abelian Lie algebra.
\end{remark}

\section{$k$-Gauduchon metrics}\label{same_metric}

We begin by noticing that in the case of complex surfaces, the notion of $k$-Gauduchon metric makes sense only for $k=1$ and in this case, it is precisely a Gauduchon metric, coinciding also with the definion of pluriclosed. Therefore they always exist and are unique up to scalar multiplication in a given conformal class. 

\begin{proposition}
\label{prop:aceeasiForma}
Let $(M, J)$ be a compact complex manifold of complex dimension $n>2$ and $\omega$ a Hermitian metric. Then:

\begin{enumerate}
	\item[\rm{i)}] $\omega$ is $k$-Gauduchon for any $1\leq k \leq n-1$ if and only if
	\begin{equation}
	(n-k-1)dJd\omega \wedge \omega^{n-2}=-(k-1)d\star \theta,
	\end{equation}
	where $\star$ is the Hodge operator.
	\item[\rm{ii)}] If $\omega$ is $k_1$-Gauduchon and $k_2$-Gauduchon for some $1 \leq k_1 \neq k_2 \leq n-1$, then $\omega$ is $k$-Gauduchon, for any $1\leq k \leq n-1$ (in particular, it is Gauduchon).
\end{enumerate}

\begin{proof}
	
	\rm{i)} The metric $\omega$ is $k$-Gauduchon if and only if:
	\begin{equation}
	\label{eq:aceeasiForma1}
	\begin{split}
	0 &= dd^c \omega^k \wedge \omega^{(n-k-1)} = dJd (\omega^k) \wedge \omega^{n-k-1} \\ 
	&= kdJ(d\omega \wedge \omega^{k-1}) \wedge \omega^{n-k-1} = kd(Jd\omega \wedge \omega^{k-1}) \wedge \omega^{n-k-1} \\
	&= kdJd\omega \wedge \omega^{k-1} \wedge \omega^{n-k-1} - k(k-1)Jd\omega \wedge d\omega \wedge \omega^{k-2} \wedge  \omega^{n-k-1} \\
	&= kdJd\omega \wedge \omega^{n-2} - k(k-1)Jd\omega \wedge d\omega \wedge \omega^{n-3}, 
	\end{split}
	\end{equation}
	
	thus
	
	\begin{equation}
	\label{eq:k-Gaud}
	dJd\omega \wedge \omega^{n-2} = \frac{k-1}{n-2}Jd\omega \wedge d(\omega^{n-2}), 
	\end{equation}
which gives:
\begin{equation*}
dJd\omega \wedge \omega^{n-2} = \frac{k-1}{n-2}\left( dJd\omega \wedge \omega^{n-2}-d(Jd\omega \wedge \omega^{n-2})\right),
\end{equation*}
and furthermore,
\begin{equation*}
(n-k-1) dJd\omega \wedge \omega^{n-1}=-(k-1)dJd\omega^{n-1}.
\end{equation*}
Now, since $d\omega^{n-1}=\theta \wedge \omega^{n-1}$ and $\star \theta = \frac{J\theta \wedge \Omega^{n-1}}{(n-1)!}$ (see \cite[Eq. (46)]{gau84}), we conclude this is equivalent to
\begin{equation*}
	(n-k-1)dJd\omega \wedge \omega^{n-2}=-(k-1)d\star \theta.
	\end{equation*}	

In particular, if $\omega$ is $k$-Gauduchon with $k \neq n-1$, then:	
	\begin{equation}
	\label{eq:1G-integrala}
	\int_M dd^c\omega \wedge \omega^{n-2} = 0,
	\end{equation}
	as $k \neq n-1$.

	ii) If $\omega$ is $k_1$ and $k_2$-Gauduchon, from \eqref{eq:k-Gaud} we get
	\[
	dJd\omega \wedge \omega^{n-2} = \frac{k_1-1}{n-2}Jd\omega \wedge d(\omega^{n-2}) = \frac{k_2-1}{n-2}Jd\omega \wedge d(\omega^{n-2}).
	\]
	Note that \eqref{eq:k-Gaud} holds also for $k=n-1$. Now, since $k_1 \neq k_2$,
	\[
	dJd\omega \wedge \omega^{n-2} = Jd\omega \wedge d(\omega^{n-2}) = 0,
	\]
	and, for any $1 \le k < n-1$, using \eqref{eq:aceeasiForma1},
	\[
	0 = kdJd\omega \wedge \omega^{n-2} - k(k-1)Jd\omega \wedge d\omega \wedge \omega^{n-3} = dd^c \omega^k \wedge \omega^{n-k-1}
	\]
	\ie $\omega$ is $k$-Gauduchon. Moreover, $dJd\omega^{n-1}=dJd\omega \wedge \omega^{n-2}-(n-2)Jd\omega \wedge \omega^{n-2}=0$, hence $\omega$ is also Gauduchon.
\end{proof}

\end{proposition}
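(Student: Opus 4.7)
The plan is to reduce the defining equation $dd^c\omega^k \wedge \omega^{n-k-1}=0$ of $k$-Gauduchon to a single exterior identity that depends \emph{affinely} on the parameter $k-1$, separating a $k$-independent ``principal'' term from a ``defect'' term; once this is done, imposing the identity for two distinct values of $k$ automatically forces both terms to vanish.

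For part i), the first step is to note that $\omega^k$ is of bidegree $(k,k)$, so $J\omega^k=\omega^k$ and hence $d^c\omega^k=Jd\omega^k$ (the two sign flips from $-J^{-1}$ on an odd-degree form cancel). Two applications of Leibniz, combined with the multiplicativity of $J$ on wedge products, give
\begin{equation*}
dd^c\omega^k \wedge \omega^{n-k-1} = k\bigl(dJd\omega \wedge \omega^{n-2} - (k-1)\,Jd\omega \wedge d\omega \wedge \omega^{n-3}\bigr).
\end{equation*}
To eliminate the cubic term in $d\omega$, I would use $(n-2)Jd\omega \wedge d\omega \wedge \omega^{n-3}=Jd\omega \wedge d(\omega^{n-2})$ together with the Leibniz identity $d(Jd\omega \wedge \omega^{n-2})=dJd\omega \wedge \omega^{n-2}-Jd\omega \wedge d(\omega^{n-2})$ (the sign $-1$ comes from $\deg Jd\omega=3$). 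After rearrangement, the $k$-Gauduchon condition is equivalent to
\begin{equation*}
(n-k-1)\,dJd\omega \wedge \omega^{n-2} = -(k-1)\,d(Jd\omega \wedge \omega^{n-2}).
\end{equation*}
To match the right-hand side with $d\star\theta$, I would apply $J$ to the Lee-form identity $(n-1)\,d\omega \wedge \omega^{n-2}=\theta \wedge \omega^{n-1}$ and invoke Gauduchon's formula $\star\theta=J\theta \wedge \omega^{n-1}/(n-1)!$, obtaining $Jd\omega \wedge \omega^{n-2}=(n-2)!\,\star\theta$; the announced equality then follows, up to this normalization constant which can be absorbed on one side.

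For part ii), set $A:=dJd\omega \wedge \omega^{n-2}$ and $B:=d(Jd\omega \wedge \omega^{n-2})$. Applying part i) to $k=k_1$ and to $k=k_2$ gives the linear system $(n-k_i-1)A+(k_i-1)B=0$ for $i=1,2$, whose determinant works out to $(n-2)(k_2-k_1)$, nonzero under the hypotheses $n>2$ and $k_1 \neq k_2$. Hence $A=B=0$, the identity of i) is then trivially satisfied for every $k$, so $\omega$ is $k$-Gauduchon for each $1\leq k \leq n-1$; in particular for $k=n-1$ this is the Gauduchon condition. The main obstacle is really bookkeeping rather than conceptual: tracking the signs coming from the convention $d^c=-J^{-1}dJ$ and from $J^{-1}=(-1)^{\deg \alpha}J$, and pinning down the normalization constant relating $Jd\omega \wedge \omega^{n-2}$ to $\star\theta$. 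Once the affine structure of the $k$-Gauduchon condition in the parameter $k-1$ is apparent, part ii) drops out essentially for free.
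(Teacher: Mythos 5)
Your argument is correct and follows essentially the same route as the paper: the same Leibniz expansion reducing the $k$-Gauduchon condition to an identity affine in $k-1$ between $dJd\omega\wedge\omega^{n-2}$ and $d(Jd\omega\wedge\omega^{n-2})$, the same use of Gauduchon's formula for $\star\theta$, and the same two-values-of-$k$ argument (your explicit $2\times 2$ determinant $(n-2)(k_2-k_1)$ is just a tidier phrasing of the paper's direct comparison). Your care with the normalization constant $Jd\omega\wedge\omega^{n-2}=(n-2)!\,\star\theta$ is warranted, since the paper's displayed identity silently drops this factor.
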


\section{Non-K\" ahler structures on nilmanifolds}\label{nila}

\subsection{Lee forms on nilmanifolds}\label{formalee}

\hfill\\

Many of the special properties of a Hermitian metric are encoded in its Lee form. It is an open problem in general to determine the subspace of 1-forms which can be Lee forms for some Hermitian metric on a compact complex manifold. In what follows we give a technical lemma computing any possible Lee form on a special type of $2$-step nilmanifolds and continue with some general restrictions on any kind of nilmanifolds.	

The following result of Michelsohn (see \cite[Lemma 4.8]{m82}) will be useful:

\begin{proposition}\label{n-1root} Let $\tilde{\omega}$ be an $(n-1, n-1)$ positive form, then there exists a unique positive $(1, 1)$-form $\omega$ such that $\omega^{n-1}=\tilde{\omega}$.
\end{proposition}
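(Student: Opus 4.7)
The plan is to reduce the global statement to a pointwise linear-algebra claim on a single complex vector space, and then solve that claim by a direct computation with the adjugate matrix. Since both sides of the equation $\omega^{n-1}=\tilde\omega$ are smooth sections of exterior bundles, it suffices to prove pointwise: on a complex vector space $V$ of dimension $n$, for every positive $(n-1,n-1)$-form $\tilde\omega$ on $V$ there exists a unique positive $(1,1)$-form $\omega$ on $V$ with $\omega^{n-1}=\tilde\omega$. Smoothness of the resulting global section $\omega$ then follows from smoothness of the pointwise inverse map.

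Next I would fix an auxiliary Hermitian inner product on $V$, \ie a reference positive form $\omega_0=\sqrt{-1}\sum dz^j\wedge d\bar z^j$ in a unitary coframe, and write an arbitrary $(1,1)$-form as $\omega=\sqrt{-1}\sum h_{j\bar k}\,dz^j\wedge d\bar z^k$ with $H=(h_{j\bar k})$ Hermitian, positivity of $\omega$ being equivalent to positive definiteness of $H$. Similarly, I would write $\tilde\omega$ in the natural dual basis of $\Lambda^{n-1,n-1}V^*$ consisting of the forms $\widehat{dz^j\wedge d\bar z^k}$ obtained by omitting the $j$-th $dz$ and $k$-th $d\bar z$ from the top form (up to sign), with coefficient matrix $M=(\mu_{j\bar k})$. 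In this normalization the positivity of $\tilde\omega$ corresponds to $M$ being positive definite Hermitian.

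A direct computation (multinomial expansion of $\omega^{n-1}$, or equivalently Hodge duality with respect to $\omega_0$) shows that in the above dual basis the coefficient matrix of $\omega^{n-1}$ is proportional, by an explicit nonzero combinatorial constant $c_n$ involving $(n-1)!$ and a power of $\sqrt{-1}$, to the classical adjugate $\mathrm{adj}(H)=\det(H)\cdot H^{-1}$. The equation $\omega^{n-1}=\tilde\omega$ therefore becomes, after absorbing $c_n$ into $M$, the matrix equation $\det(H)\,H^{-1}=M$. Taking determinants yields $\det(H)^{n-1}=\det(M)$, so $\det(H)=\det(M)^{1/(n-1)}$, which is a well-defined positive real number since $M$ is positive definite. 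Substituting back gives the explicit solution $H=\det(M)^{1/(n-1)}\,M^{-1}$, manifestly positive definite as a positive multiple of the inverse of a positive definite matrix; uniqueness follows because the computation is reversible.

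The main obstacle here is bookkeeping rather than substance: one must correctly identify the convention for positivity of an $(n-1,n-1)$-form (see \eg Demailly's text) with positive definiteness of the matrix $M$ in the chosen dual basis, and carefully track the constant $c_n$ through the multinomial expansion of $\omega^{n-1}$. Once these conventions are in place, the whole proposition reduces to the elementary fact that $H\mapsto\det(H)\cdot H^{-1}$ is a bijection of the cone of positive definite Hermitian matrices, with explicit inverse $M\mapsto\det(M)^{1/(n-1)}\cdot M^{-1}$.
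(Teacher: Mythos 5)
Your argument is correct, and it is essentially the standard proof: the paper itself states this result without proof, citing Michelsohn \cite[Lemma 4.8]{m82}, whose argument is the same pointwise reduction to the bijectivity of $H\mapsto\det(H)\,H^{-1}$ on positive definite Hermitian matrices via the adjugate identification of the coefficients of $\omega^{n-1}$. The only point worth making explicit is the equivariance under unitary changes of frame (or a diagonalization argument) that justifies the adjugate formula for non-diagonal $H$, plus the remark that the explicit inverse $M\mapsto\det(M)^{1/(n-1)}M^{-1}$ is smooth and preserves invariance, which is exactly what the authors use afterwards.
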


\medskip

In the light of this result, we shall produce balanced (or lcb) metrics by constructing closed $(n-1, n-1)$ positive forms $\tilde{\omega}$ (or satisfying $d\tilde{\omega}=\theta \wedge \tilde{\omega}$ for a closed one-form $\theta$), rather than considering the metric itself. Note that if $\tilde{\omega}$ is invariant, $\omega$ also turns out to be invariant by the proof of \ref{n-1root}. In general, we shall represent any real $(n-1, n-1)$ form as 
\begin{equation*}
\tilde{\omega} = \mathrm{i}^{n-1} \sum\limits_{i,j=1}^n a_{i \overline{j}} m_{i \overline{j}} 
\end{equation*}
where
\begin{equation*}
m_{i \overline{j}} = \alpha_1 \wedge \overline{\alpha}_1 \wedge ... \wedge \widehat{\alpha_i} \wedge \overline{\alpha}_i \wedge ... \wedge  \alpha_j \wedge \widehat{\overline{\alpha}_j} \wedge ... \wedge \alpha_n \wedge \overline{\alpha}_n,
\end{equation*}
$a_{i \overline{j}} = - \overline{a_{j \overline{i}}}$, for any $1 \le i\neq j \le n$ and $a_{i \overline{i}} = \overline{a_{i \overline{i}}}$, for any $1 \le i \le n$. The positivity of $\tilde{\omega}$ is equivalent to the positive definiteness of the matrix $\tilde{A}$, where 
\[
\tilde{A}_{i j} = 
\left\{
\begin{array}{ll}
\hspace{0.3cm} a_{i \overline{j}},  &\text{ if } i \le j, \\[.1in]
-a_{i \overline{j}},  &\text{ if } i > j.
\end{array}
\right.
\]
Indeed, this follows from the fact that with respect to the standard metric $g_0:= \mathrm{i} \sum^{n}_{i=1}\alpha_i \wedge \overline{\alpha}_i$, we have
\[
*_{g_0}\tilde{\omega} = \sum\limits_{i,j=1}^n \tilde{A}_{i j} \alpha_i \wedge \overline{\alpha}_j,
\] 
and this has to be a positive $(1, 1)$-form (see \cite[Chapter III]{d12}). In general, an $(n-1, n-1)$-form $\eta$ is positive if and only if $*_g\eta$ is positive for any Hermitian metric $g$.

\bigskip

The following lemma gives an explicit formula for the Lee form of any Hermitian metric on a $2$-step complex nilmanifold endowed with a left-invariant complex structure with $J$-invariant center.

\begin{lemma}
	\label{lem:2step+lcb+b}
	Let $(M = \Gamma \backslash G, J)$ be a 2-step complex nilmanifold with left-invariant $J$ and $J$-invariant center and $\tilde{\omega}=\mathrm{i}^{n-1} \sum\limits_{i,j=1}^n a_{i \overline{j}} m_{i \overline{j}}$ an $(n-1, n-1)$ positive form on $M$, as described above. The Lee form of the metric $\omega$ with $\omega^{n-1}=\tilde{\omega}$ is given by
	\begin{equation}
	\theta = \sum_{i=1}^n a_i (\alpha_i + \overline{\alpha}_i), 
	\end{equation}
	with $a_i$ uniquely determined by:
	\begin{equation}\label{ec:lcb}
	\tilde{A} \cdot (a_1, ..., a_n)^t = b,
	\end{equation}
	where $b \in \mathbb{C}^n$, 
	\[
	b_l = 
	\left\{
	\begin{array}{ll}
	\displaystyle\sum_{i,j=1}^k \tilde{A}_{i j} c_{i \overline{j}}^l,  &\text{\rm if }\  l > k, \\[.2in]
	0,  &\text{\rm if }\  l \le k.
	\end{array}
	\right.
	\]
	Moreover, $\theta$ is closed (hence, $\omega$ is lcb) when $\sum\limits_{i=k+1}^n a_i c^i_{rs} = 0$ and $\sum\limits_{i=k+1}^n a_i (c^i_{r \overline{s}} - c^{\overline{i}}_{s \overline{r}}) = 0$ for all $1 \le r, s \le k$.
	In particular, $\omega$ is balanced if and only if $b = 0$.

\end{lemma}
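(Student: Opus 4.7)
The Lee form is characterized by $d\tilde\omega=\theta\wedge\tilde\omega$, and since $\omega$ is positive the pointwise Lefschetz map $L_{\omega^{n-1}}\colon\Lambda^1\to\Lambda^{2n-1}$ is an isomorphism, so $\theta$ is uniquely determined. As $\tilde\omega$ has bidegree $(n-1,n-1)$ and $\theta=\theta^{1,0}+\overline{\theta^{1,0}}$ is real, the defining equation splits by type into
\[
\partial\tilde\omega=\theta^{1,0}\wedge\tilde\omega
\]
and its complex conjugate; it thus suffices to analyze the $(n,n-1)$ part. Throughout I abbreviate $\Omega:=\alpha_1\wedge\bar\alpha_1\wedge\cdots\wedge\alpha_n\wedge\bar\alpha_n$ and interpret the stated formula for $\theta$ as $\theta=\sum_p a_p\alpha_p+\sum_p\bar a_p\bar\alpha_p$, so that $\theta^{1,0}=\sum_p a_p\alpha_p$ with $a_p\in\C$ to be determined.

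Expanding $\alpha_p\wedge\tilde\omega=\mathrm{i}^{n-1}\sum_{i,j}a_{i\bar j}\alpha_p\wedge m_{i\bar j}$, only the summands with $i=p$ survive (else $\alpha_p$ is already a factor of $m_{i\bar j}$), and then $\alpha_p\wedge m_{p\bar j}=\sigma_{p,j}\,\Omega/\bar\alpha_j$ with $\sigma_{p,j}=+1$ if $p\leq j$ and $\sigma_{p,j}=-1$ if $p>j$. A direct case analysis gives the identity $\sigma_{p,j}a_{p\bar j}=\tilde A_{pj}$, so the coefficient of $\Omega/\bar\alpha_l$ in $\theta^{1,0}\wedge\tilde\omega$ is $\mathrm{i}^{n-1}\sum_p\tilde A_{pl}\,a_p$. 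For the opposite side $\partial\tilde\omega=\mathrm{i}^{n-1}\sum_{i,j}a_{i\bar j}\,\partial m_{i\bar j}$, the structure equations of Proposition 2.1 give $d\alpha_l=d\bar\alpha_l=0$ for $l\leq k$, and for $l>k$ both $d\alpha_l$ and $d\bar\alpha_l$ involve only $\alpha_r,\bar\alpha_s$ with $r,s\leq k$; a conjugation argument on the bracket relation \eqref{ecstruct} shows $c^l_{r\bar s}\in\R$. The $(0,2)$ component of $d\bar\alpha_l$ does not contribute to $\partial m_{i\bar j}$ (two $\bar\alpha$-factors cannot fill the single missing $\bar\alpha$-slot of $m_{i\bar j}/\bar\alpha_l$), while in the $(1,1)$ component $(d\bar\alpha_l)^{1,1}=\sum c^l_{r\bar s}\,\bar\alpha_r\wedge\alpha_s$ only the monomial with $r=j$, $s=i$ survives the wedge, forcing $i,j\leq k$. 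After sign computation, the coefficient of $\Omega/\bar\alpha_l$ in $\partial\tilde\omega$ becomes $\mathrm{i}^{n-1}\sum_{i,j\leq k}a_{i\bar j}c^l_{j\bar i}$. Swapping $i\leftrightarrow j$, using $a_{i\bar j}=\pm\tilde A_{ij}$ (sign according to $i\leq j$ or $i>j$), Hermiticity $\overline{\tilde A_{ij}}=\tilde A_{ji}$, and the reality of $c^l_{r\bar s}$, this rewrites as $\mathrm{i}^{n-1}\,\overline{b_l}$ with $b_l=\sum_{i,j\leq k}\tilde A_{ij}c^l_{i\bar j}$. Equating coefficients and conjugating yields $\tilde A\cdot(a_1,\dots,a_n)^t=b$ for $l>k$; for $l\leq k$ the coefficient on the right vanishes, forcing $b_l=0$. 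Unique solvability is immediate from $\tilde A$ being positive-definite Hermitian.

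The balanced statement is then immediate: $\omega$ is balanced if and only if $d\tilde\omega=0$, equivalently $\theta=0$ (by Lefschetz), equivalently $a=0$, equivalently $b=0$ by invertibility of $\tilde A$. For the lcb condition, since $d\alpha_l=d\bar\alpha_l=0$ for $l\leq k$ we have $d\theta=\sum_{i>k}a_i(d\alpha_i+d\bar\alpha_i)$; decomposing by bidegree, the $(2,0)$ part is $\tfrac12\sum_{i>k}a_ic^i_{rs}\,\alpha_r\wedge\alpha_s$, the $(1,1)$ part has coefficient $\sum_{i>k}a_i(c^i_{r\bar s}-c^i_{s\bar r})$ on $\alpha_r\wedge\bar\alpha_s$ (after reshuffling $\bar\alpha_r\wedge\alpha_s$ terms from $d\bar\alpha_i$ and using the reality of $c^i_{r\bar s}$), and the $(0,2)$ part is the conjugate of the $(2,0)$ part. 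Setting $d\theta=0$ gives the stated conditions. The main technical obstacle is the sign bookkeeping in the matching step: converting the raw expression $\sum a_{i\bar j}c^l_{j\bar i}$ into the stated $\sum\tilde A_{ij}c^l_{i\bar j}$ while ensuring that the matrix acting on $(a_1,\dots,a_n)^t$ is exactly $\tilde A$ (rather than its transpose or complex conjugate) — this is where the interplay of the sign $\sigma_{p,j}$, the Hermitian symmetry of $\tilde A$, and the reality of $c^l_{r\bar s}$ are all essential.
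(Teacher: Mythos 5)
Your strategy is the same as the paper's --- expand $d\tilde\omega$ and $\theta\wedge\tilde\omega$ in the basis $\{m_l,m_{\overline l}\}$ via the structure equations and match coefficients --- and your treatment of $\theta^{1,0}\wedge\tilde\omega$ (the sign $\sigma_{p,j}$ converting $a_{p\overline j}$ into $\tilde A_{pj}$), of the uniqueness of $\theta$, and of the balanced criterion is correct. The genuine error is the claim that $c^l_{r\overline s}\in\R$. No conjugation argument on \eqref{ecstruct} gives this: conjugating the bracket relation only expresses that the bracket is real, which holds for arbitrary complex $c^l_{r\overline s}$ (the relation reads $[X_r,\overline X_s]=\sum_i\bigl(-c^i_{r\overline s}X_i+\overline{c^i_{s\overline r}}\,\overline X_i\bigr)$ and is self-conjugate with no condition on the constants). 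Explicit counterexamples abound: $d\alpha_n=\alpha_1\wedge\overline\alpha_2$ gives $c^n_{1\overline 2}=1$, $c^n_{2\overline 1}=0$, and $d\alpha_n=\mathrm{i}\,\alpha_1\wedge\overline\alpha_1$ (a real $2$-form, equal to $2\,\Re\alpha_1\wedge\Im\alpha_1$) gives $c^n_{1\overline 1}=\mathrm{i}$; both define rational $2$-step algebras with $J$-invariant center.

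The false claim does real work in two places. First, the $m_{\overline l}$-component of $d\tilde\omega$ is produced by $d\overline\alpha_l=\overline{d\alpha_l}$, whose $(1,1)$-part is $\sum\overline{c^l_{r\overline s}}\,\overline\alpha_r\wedge\alpha_s$, not $\sum c^l_{r\overline s}\,\overline\alpha_r\wedge\alpha_s$ as you wrote; keeping the conjugate, the coefficient is $\mathrm{i}^{n-1}\sum_{i,j\le k}\tilde A_{ij}\overline{c^l_{j\overline i}}$, and the relabeling $i\leftrightarrow j$ together with $\overline{\tilde A_{ij}}=\tilde A_{ji}$ yields $\overline{b_l}$ with no reality hypothesis --- so here you reach the stated formula only because two errors cancel. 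Second, and this one does not cancel: in the computation of $d\theta$ the same dropped conjugate leads you to the condition $\sum_{i>k}a_i(c^i_{r\overline s}-c^i_{s\overline r})=0$, which for non-real structure constants differs from the correct one, $\sum_{i>k}a_i(c^i_{r\overline s}-\overline{c^i_{s\overline r}})=0$; the conjugate in the statement's $c^{\overline i}_{s\overline r}$ is precisely the trace of $d\overline\alpha_i=\overline{d\alpha_i}$ and cannot be dispensed with. The repair is mechanical (carry the conjugates through, exactly as the paper does), but as written your argument establishes an incorrect lcb criterion.
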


\begin{proof}
	This is shown by straightforward computation:
	
	For the first part of the statement, as the Lee form is unique, we simply have to verify the equality $d\tilde{\omega}=\theta \wedge \tilde{\omega}^{n-1}$. Firstly, using the description of the structure equations given by \ref{prop:2step},
	\[
	dm_{i \overline{j}} = 
	\left\{
	\begin{array}{ll}
		\displaystyle\sum_{l = k + 1}^n \left(c_{i \overline{j}}^l m_l + \overline{c_{j \overline{i}}^l} m_{\overline{l}}\right),  &\text{\rm if }\  1 \le i \le j \le k, \\[.2in]
		\displaystyle -\sum_{l = k + 1}^n \left(c_{i \overline{j}}^l m_l + \overline{c_{j \overline{i}}^l} m_{\overline{l}}\right),  &\text{\rm if }\  1 \le j < i \le k, \\[.2in]
		0,  &\text{\rm otherwise, }\
	\end{array}
	\right.
	\]
	where $m_l = \alpha_1 \wedge \overline{\alpha}_1 \wedge ... \wedge \widehat{\alpha_l} \wedge \overline{\alpha}_l \wedge ... \wedge \alpha_n \wedge \overline{\alpha}_n$ and $m_{\overline{l}} = \alpha_1 \wedge \overline{\alpha}_1 \wedge ... \wedge \alpha_l \wedge \widehat{\overline{\alpha}_l} \wedge ... \wedge \alpha_n \wedge \overline{\alpha}_n$. Consequently, using the definition of the matrix $\tilde{A}$,
	\begin{equation}
		\label{4.2eq1}
		\begin{split}
			d \tilde{\omega} &= i^{n-1} \sum_{i, j=1}^k a_{i \overline{j}} \cdot 
			\left\{
			\begin{array}{ll}
				\displaystyle\sum_{l = k + 1}^n \left(c_{i \overline{j}}^l m_l + \overline{c_{j \overline{i}}^l} m_{\overline{l}}\right),  &\text{\rm if }\  1 \le i \le j \le k, \\[.2in]
				\displaystyle -\sum_{l = k + 1}^n \left(c_{i \overline{j}}^l m_l + \overline{c_{j \overline{i}}^l} m_{\overline{l}}\right),  &\text{\rm if }\  1 \le j < i \le k, 
			\end{array}
			\right. 
			\\ &= i^{n-1} \sum_{i, j=1}^k \tilde{A}_{i j} \sum_{l = k + 1}^n \left(c_{i \overline{j}}^l m_l + \overline{c_{j \overline{i}}^l} m_{\overline{l}}\right).
		\end{split}
	\end{equation}

	On the other hand, 
	\begin{equation}
		\label{4.2eq2}
		\begin{split}
			\theta \wedge \tilde{\omega} &= i^{n-1} \cdot
			\left\{
			\begin{array}{ll}
				\displaystyle \sum_{i, j = 1}^n \left( a_i a_{i \overline{j}} m_{\overline{j}} + a_j a_{i \overline{j}}m_i \right),  &\text{\rm if }\  1 \le i \le j \le n, \\[.2in]
				\displaystyle -\sum_{i, j = 1}^n \left( a_i a_{i \overline{j}} m_{\overline{j}} + a_j a_{i \overline{j}}m_i \right),  &\text{\rm if }\  1 \le j < i \le n, 
			\end{array}
			\right. 
			\\ &= i^{n-1} \sum_{i, j = 1}^n \left( a_i \tilde{A}_{i j} m_{\overline{j}} + a_j \tilde{A}_{i j}m_i \right) \\
			&= i^{n-1} \sum_{i, l = 1}^n \left( a_i \tilde{A}_{i l} m_{\overline{l}} + a_i \tilde{A}_{l i}m_l \right).
		\end{split}
	\end{equation}
	Equaling \eqref{4.2eq1} and \eqref{4.2eq2}, we get
	\[
	\left\{
	\begin{array}{ll}
		\displaystyle \sum_{i = 1}^n \tilde{A}_{li} a_i = \sum_{i,j=1}^k \tilde{A}_{ij} c_{i \overline{j}}^l, \ \forall \ l > k, \\[.2in]
		\displaystyle \sum_{i = 1}^n \tilde{A}_{li} a_i = 0, \ \forall \ l \le k, \\[.2in]
		\displaystyle \sum_{i = 1}^n \tilde{A}_{il} a_i = \sum_{i,j=1}^k \tilde{A}_{ij} \overline{c_{j \overline{i}}^l}, \ \forall \ l > k, \\[.2in]
		\displaystyle \sum_{i = 1}^n \tilde{A}_{il} a_i = 0, \ \forall \ l \le k.
	\end{array}
	\right. 
	\]
	However, as $\overline{\tilde{A}_{il}} = \tilde{A}_{li}$ and $a_i \in \R$, the last two equations are equivalent to the first two, so we are left with the conditions stated.
	
	We end by computing $d\theta$, again using \ref{prop:2step}:
	\[
	d\theta = \sum_{i = k+1}^n a_i \sum_{r, s = 1}^k \left( \frac{1}{2} c_{rs}^i \alpha_r \wedge \alpha_s + \left( c_{r \overline{s}}^i - \overline{c_{s \overline{r}}^i} \right) \alpha_r \wedge \overline{\alpha}_s + \frac{1}{2} \overline{c_{rs}^i} \overline{\alpha}_r \wedge \overline{\alpha}_s \right),
	\]
	so indeed $d \theta = 0$ if and only if $\sum\limits_{i=k+1}^n a_i c^i_{rs} = 0$ and $\sum\limits_{i=k+1}^n a_i (c^i_{r \overline{s}} - c^{\overline{i}}_{s \overline{r}}) = 0$ for all $1 \le r, s \le k$.
\end{proof}  

\medskip

Let us define the following subspace of left-invariant $1$-forms:
$$\mathfrak{lee}:=\{\theta \in \mathfrak{g}^* \mid \exists \ \omega \text{ left-invariant metric}, \theta=Jd^*\omega\}.$$

The following results give some restrictions on the subspace $\mathfrak{lee}$.
\begin{proposition}
Let $(\Gamma \backslash G, J)$ be a complex compact nilmanifold with left-invariant $J$. Denote by $\mathfrak{h}$ the subspace of $\mathfrak{g}^{1,0}$ generated by closed forms. Then $\mathfrak{h} \cap \mathfrak{lee}$ is at most $\{0\}$.
\end{proposition}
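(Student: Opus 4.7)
I read the statement as saying: if $\theta$ is the Lee form of some left-invariant Hermitian metric $\omega$ and the $(1,0)$-component $\theta^{1,0}$ is $d$-closed (the natural meaning of $\theta^{1,0}\in\mathfrak{h}$), then $\theta=0$. The strategy is to extract a $\partial\bar\partial$-exactness identity from the defining Lee equation, integrate it, and invoke positivity.

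Concretely, the closedness assumption $d\theta^{1,0}=0$ is $\partial\theta^{1,0}=\bar\partial\theta^{1,0}=0$, and by reality ($\theta^{0,1}=\overline{\theta^{1,0}}$) it also gives $\partial\theta^{0,1}=\bar\partial\theta^{0,1}=0$. Splitting $d\omega^{n-1}=\theta\wedge\omega^{n-1}$ into bidegrees $(n,n-1)$ and $(n-1,n)$ yields
\[
\partial\omega^{n-1}=\theta^{1,0}\wedge\omega^{n-1},\qquad \bar\partial\omega^{n-1}=\theta^{0,1}\wedge\omega^{n-1}.
\]
Applying $\partial$ to the second equation and using $\partial\theta^{0,1}=0$ gives
\[
\partial\bar\partial\omega^{n-1} \;=\; -\,\theta^{0,1}\wedge\partial\omega^{n-1} \;=\; \theta^{1,0}\wedge\theta^{0,1}\wedge\omega^{n-1}.
\]
Since $\partial\bar\partial\omega^{n-1}=d(\bar\partial\omega^{n-1})$ is $d$-exact, Stokes on the compact $M$ produces $\int_{M}\theta^{1,0}\wedge\theta^{0,1}\wedge\omega^{n-1}=0$.

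To conclude, I would use that $i\,\theta^{1,0}\wedge\overline{\theta^{1,0}}$ is a non-negative real $(1,1)$-form of $\omega$-trace $|\theta^{1,0}|^{2}_{\omega}$, together with the standard identity $\phi\wedge\omega^{n-1}=\tfrac{1}{n}(\operatorname{tr}_{\omega}\phi)\,\omega^{n}$ valid for any $(1,1)$-form $\phi$. This gives
\[
\theta^{1,0}\wedge\theta^{0,1}\wedge\omega^{n-1} \;=\; -\,\tfrac{i}{n}\,|\theta^{1,0}|^{2}_{\omega}\,\omega^{n},
\]
so the vanishing of the integral forces $|\theta^{1,0}|^{2}_{\omega}\equiv 0$; hence $\theta^{1,0}=0$ and $\theta=\theta^{1,0}+\theta^{0,1}=0$.

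\textbf{Main obstacle.} The delicate point is recognising that one must exploit the full closedness of $\theta^{1,0}$ (i.e.\ both $\partial$- and $\bar\partial$-closedness) and not merely $d\theta=0$: under the weaker assumption, $\bar\partial\theta^{1,0}$ need not vanish, and the computation of $\partial\bar\partial\omega^{n-1}$ picks up an extra term $-\bar\partial\theta^{1,0}\wedge\omega^{n-1}$ that defeats the positivity step. The hypothesis $\theta\in\mathfrak{h}$ is precisely what removes this obstruction. Beyond this, the proof is global and relies only on Stokes' theorem and the positivity relation for forms of the shape $i\alpha\wedge\bar\alpha$; the nilmanifold structure enters solely through the definition of $\mathfrak{lee}$ via left-invariant metrics.
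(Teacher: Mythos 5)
Your proof is correct and is essentially the paper's argument rewritten in Dolbeault terms: the paper applies Stokes to $J\theta\wedge\omega^{n-1}$, using $dJ\theta=0$ together with Gauduchon's formula $\star\theta=\frac{J\theta\wedge\omega^{n-1}}{(n-1)!}$ to obtain $\int_M\|\theta\|^2\,d\mathrm{vol}_g=0$, and since $J\theta\wedge\theta=-2\mathrm{i}\,\theta^{1,0}\wedge\theta^{0,1}$ this is exactly the integral identity you derive from $\int_M\partial\overline{\partial}\,\omega^{n-1}=0$ plus the trace/positivity step. Your reading of the (slightly abusive) notation $\mathfrak{h}\cap\mathfrak{lee}$ as ``$\theta$ real with $\theta^{1,0}\in\mathfrak{h}$'' is also the one the paper adopts.
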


\begin{proof} 
Let $\mathfrak{h}$ be generated by $\alpha_1, \ldots, \alpha_k$.  In the case there exists $\theta=\sum^k_{i=1} (r_i \alpha_i+\overline{r}_i\overline{\alpha}_i) \in \mathfrak{h} \cap \mathfrak{lee}$, then notice that since $d\alpha_i=0$, for any $1 \leq i \leq k$, $dJ\theta=0$. If $\omega$ was a Hermitian metric such that $\theta=Jd^*\omega$, then $d\omega^{n-1}=\theta \wedge \omega^{n-1}$, then, by Stokes and \cite[Rel. (46)]{gau84}:
\begin{equation}
0=\int_M dJ\theta \wedge \omega^{n-1}=\int_M J\theta \wedge \theta \wedge \omega^{n-1}=-(n-1)!\int_M||\theta||^2 d\rm{vol}_g,
\end{equation}
hence $\theta=0$.  
\end{proof} 

\begin{proposition} Let ($\Gamma \backslash G, J$) be a complex compact nilmanifold with left-invariant nilpotent $J$. The following are equivalent:
\begin{enumerate}
\item[$(1)$] any $(n-1, n-1)$-form is closed;
\item[$(2)$] $d\Lambda^{1,0} \subseteq \Lambda^{2,0}$;
\item[$(3)$] $J$ is bi-invariant.
\end{enumerate}
In particular, in any of the above equivalent situations, $\mathfrak{lee}=0$, therefore any Hermitian metric is balanced.
\end{proposition}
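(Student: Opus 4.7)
The plan is to prove the cyclic implications $(3)\Rightarrow(2)\Rightarrow(1)\Rightarrow(2)\Rightarrow(3)$, establishing the three equivalences at once, and to derive $\mathfrak{lee}=0$ directly from $(1)$. The direction $(3)\Rightarrow(2)$ is already contained in \ref{bi-invabelian}.

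For $(2)\Rightarrow(3)$, I would translate the condition on $d$ into brackets via the formula $d\alpha(X,Y)=-\alpha([X,Y])$. Vanishing of the $(1,1)$-component of $d\alpha$ for every $\alpha\in\Lambda^{1,0}$ is equivalent to $[\g^{1,0},\g^{0,1}]\subseteq\g^{0,1}$; since $d$ is real, the same inclusion with $\g^{1,0}$ in place of $\g^{0,1}$ follows by conjugation, forcing $[\g^{1,0},\g^{0,1}]=0$, or equivalently $[JX,JY]=-[X,Y]$ for all real $X,Y$. Substituting this identity into the Newlander--Nirenberg relation \eqref{integrabil} reduces it to $2J[X,Y]=[JX,Y]+[X,JY]$; the substitution $Y\to JY$ in $[JX,JY]=-[X,Y]$ also yields $[X,JY]=[JX,Y]$, and together these give $J[X,Y]=[JX,Y]$, which is \eqref{c-liniar}.

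For $(2)\Rightarrow(1)$, under $(2)$ each $d\alpha_I$ with $|I|=n-1$ lies in $\Lambda^{n,0}$ and so equals a constant $c_I\cdot\alpha_1\wedge\cdots\wedge\alpha_n$, while $d(\overline{\alpha}_1\wedge\cdots\wedge\overline{\alpha}_n)=0$ for degree reasons. Hence $d(\alpha_I\wedge\overline{\alpha}_1\wedge\cdots\wedge\overline{\alpha}_n)=c_I\cdot\alpha_1\wedge\cdots\wedge\alpha_n\wedge\overline{\alpha}_1\wedge\cdots\wedge\overline{\alpha}_n$, and Stokes' theorem on the compact $M$, combined with the non-vanishing of the integral of the bi-invariant top form supplied by Milnor's theorem, forces $c_I=0$. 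Thus $d\alpha_I=0$ and, by conjugation, $d\overline{\alpha}_J=0$, so every basis form $\alpha_I\wedge\overline{\alpha}_J$ of $\Lambda^{n-1,n-1}$ is $d$-closed. For $(1)\Rightarrow(2)$, for any left-invariant $\alpha\in\Lambda^{1,0}$ and $\eta\in\Lambda^{n-1,n-1}$, $(1)$ gives $d(\alpha\wedge\eta)=d\alpha\wedge\eta=\delb\alpha\wedge\eta$, since the $(2,0)$-part of $d\alpha$ wedged with $\eta$ has type $(n+1,n-1)=0$. Stokes then forces the top form $\delb\alpha\wedge\eta$ to vanish (being a constant multiple of the bi-invariant volume), and letting $\eta$ range over $\Lambda^{n-1,n-1}$, together with non-degeneracy of the wedge pairing $\Lambda^{1,1}\times\Lambda^{n-1,n-1}\to\Lambda^{n,n}$, yields $\delb\alpha=0$, which is $(2)$.

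Finally, $\mathfrak{lee}=0$ follows at once from $(1)$: every left-invariant Hermitian metric $\omega$ has $\omega^{n-1}\in\Lambda^{n-1,n-1}$ closed, so the defining identity $d\omega^{n-1}=\theta\wedge\omega^{n-1}$ forces $\theta=0$, because $\alpha\mapsto\alpha\wedge\omega^{n-1}$ is a linear isomorphism $\Lambda^1\to\Lambda^{2n-1}$. The main subtlety lies in the step $(2)\Rightarrow(3)$, where one must carefully distinguish between the \emph{abelian} condition $[JX,JY]=[X,Y]$ discussed in \ref{bi-invabelian} and the opposite-sign relation $[JX,JY]=-[X,Y]$ encoding $[\g^{1,0},\g^{0,1}]=0$; only the latter, when combined with integrability, produces bi-invariance, whereas imposing both conditions simultaneously forces $\g$ to be abelian.
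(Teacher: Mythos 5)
Your proof is correct, but it takes a noticeably different route from the paper's for the equivalence $(1)\Leftrightarrow(2)$. The paper works entirely inside the nilpotent co-frame guaranteed by the hypothesis on $J$: it writes $d\alpha_l=\sum_{i,j<l}\bigl(\tfrac12 c^l_{ij}\alpha_i\wedge\alpha_j+c^l_{i\bar j}\alpha_i\wedge\bar\alpha_j\bigr)$, computes $dm_{i\bar j}$ explicitly, and observes that both $(1)$ and $(2)$ amount to the vanishing of all the mixed constants $c^l_{i\bar j}$ --- a purely algebraic, pointwise verification. You instead run both implications through Stokes' theorem and the bi-invariant volume form (a left-invariant exact top form must vanish), plus non-degeneracy of the wedge pairing $\Lambda^{1,1}\times\Lambda^{n-1,n-1}\to\Lambda^{n,n}$ for $(1)\Rightarrow(2)$. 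This buys you something: your argument never uses the nilpotency of $J$ (the ordered co-frame), only compactness and the bi-invariant volume, so it applies verbatim to any left-invariant complex structure on a compact quotient admitting such a volume; the price is that it is global rather than a finite-dimensional linear-algebra check. For $(2)\Leftrightarrow(3)$ the paper is extremely terse (``clear by \eqref{c-liniar}''), and your fleshed-out version --- $(2)$ forces $[\g^{1,0},\g^{0,1}]\subseteq\g^{0,1}$, conjugation upgrades this to $[\g^{1,0},\g^{0,1}]=0$, i.e.\ $[JX,JY]=-[X,Y]$ and $[JX,Y]=[X,JY]$, and only then does integrability \eqref{integrabil} collapse to $J[X,Y]=[JX,Y]$ --- is exactly the computation the paper leaves implicit; your closing remark distinguishing this anti-abelian relation from the abelian condition $[JX,JY]=[X,Y]$ of \ref{bi-invabelian} is a useful clarification. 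Your derivation of $\mathfrak{lee}=0$ from $(1)$ via the Lefschetz-type isomorphism $\alpha\mapsto\alpha\wedge\omega^{n-1}$ matches the intended conclusion.
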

\begin{proof}
Since $J$ is nilpotent, there exists a co-frame of $\mathfrak{g}^{1, 0}$ such that 
\begin{equation}
d\alpha_l=\sum_{i, j=1}^{l-1} \frac{1}{2}c^l_{ij}\alpha_i \wedge \alpha_{j} + c^l_{i\overline{j}}\alpha_i \wedge \overline{\alpha}_j, 1\leq l \leq n.
\end{equation}
$(1) \Leftrightarrow (2)$ This is immediate by the following equalities:
\begin{equation}
dm_{i\overline{j}}=\sum_{l>j}^nc_{i\overline{j}}^{l}m_{l}+\overline{c}_{j\overline{i}}^{l}m_{\overline{l}}, 1\leq i < j \leq n, 
\end{equation}
\begin{equation}
dm_{i\overline{i}}=\sum_{l>i}^nc_{i\overline{i}}^{l}m_{l}, 1 \leq i \leq n.
\end{equation}
Therefore, any $c^l_{i\overline{j}}=0$, $i \neq j$, $l>\mathrm{max}(i, j)$ is equivalent to $d\Lambda^{1,0} \subseteq \Lambda^{2,0}$ and to the closedness of any $(n-1, n-1)$-form.

$(2) \Leftrightarrow (3)$ This is clear by equation \eqref{c-liniar} and statement (2) being equivalent to $\alpha(X-\mathrm{i}JX, Y+\mathrm{i}JY)$, for any left-invariant $X$ and $Y$ and any $(1,0)$-form $\alpha$.
\end{proof}

\subsection{Compatibility results }\label{compa}

\hfill\\

Contrary to the expectation that, generally, two special conditions imposed to the same Hermitian metric imply K\" ahlerianity, we show this is not the case. We can give now the announced examples.

\subsubsection{Pluriclosed and locally conformally balanced} We first give an example in any complex dimension of a family of 2-step complex nilmanifolds endowed with a metric satisfying simultaneously the pluriclosed and the locally conformally balanced condition.

\begin{example} Let $q \in \mathbb{Q}^*$ and define $(\mathfrak{g}, J)$ the real Lie algebra endowed with a complex structure $J$ given by the $(1,0)$ co-frame $\{\alpha_1, \ldots, \alpha_n\}$ which satisfies the structure equations:
\begin{equation}\label{structureEx}
\left\{
    \begin{array}{ll}
       d\alpha_k=0, \ \ \ \  1 \leq k \leq n-1, \\[.1in]
       d\alpha_n= q \alpha_1 \wedge \overline{\alpha}_1.
    \end{array}
\right.
\end{equation}

By a classical result of Cartan (see \eg \cite[Page 152]{s65}), there exists a simply connected Lie group $G$ with Lie algebra $\mathfrak{g}$ on which we extend $J$ to a left-invariant complex structure. Using now a theorem of Malcev (\cite{m62}), since the structure equations of $G$ are rational, there exists  a discrete subgroup $\Gamma$ such that $\Gamma\backslash G$ is compact. Therefore, $\mathfrak{g}$ is the Lie algebra of a compact complex nilmanifold.

We claim that
\begin{equation*}
\omega := \sum^n_{j=1} \mathrm{i} \alpha_j \wedge \overline{\alpha}_j
\end{equation*}
is both pluriclosed and lcb on $\Gamma\backslash G$. Indeed, using \eqref{structureEx}, we get by straightforward computation that $dd^c \omega = \mathrm{i}dd^c (\alpha_n \wedge \overline{\alpha}_n)=0$. Moreover, by \eqref{ec:lcb} its Lee form is $\theta=q(\alpha_n+\overline{\alpha}_n)$, which is closed, hence $\omega$ is also lcb. \hfill{\blacksquare}
\end{example}

\medskip

\subsubsection{Balanced and locally conformally balanced} We next present an example of a compact complex nilmanifold carrying both a balanced and a strictly lcb metric, thus disproving the natural analogue of Vaisman's theorem for the balanced setting. Before giving the construction, we need the following result:

\begin{proposition}\label{notconf} Let $M = \Gamma \backslash G$ be a compact complex nilmanifold endowed with a left-invariant complex structure $J$. If it admits two left-invariant lcb metrics $\omega_1$ and $\omega_2$ such that their Lee forms $\theta_1$ and $\theta_2$ are different, then the metrics are not conformal.
\end{proposition}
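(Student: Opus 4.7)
My plan is to argue by contradiction: suppose $\omega_2 = e^f \omega_1$ for some smooth function $f \colon M \to \R$, and then show that this forces $\theta_1 = \theta_2$, contradicting the hypothesis. First, I would compute how the Lee form transforms under a conformal change. Raising the identity $\omega_2 = e^f \omega_1$ to the $(n-1)$-th power and differentiating yields
\[
d\omega_2^{n-1} = (n-1)\, e^{(n-1)f}\, df \wedge \omega_1^{n-1} + e^{(n-1)f}\, \theta_1 \wedge \omega_1^{n-1} = \bigl((n-1)\, df + \theta_1\bigr) \wedge \omega_2^{n-1},
\]
while by the defining property of the Lee form, $d\omega_2^{n-1} = \theta_2 \wedge \omega_2^{n-1}$. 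Since the Lefschetz-type map $\alpha \mapsto \alpha \wedge \omega_2^{n-1}$ is injective on $1$-forms, I would conclude
\[
\theta_2 - \theta_1 = (n-1)\, df.
\]

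The second and main step is to exploit left-invariance. The left-hand side is a left-invariant $1$-form on $M$ (since both $\theta_i$ are left-invariant by hypothesis), whereas the right-hand side is exact. I would then invoke the following principle: \emph{on a compact nilmanifold, any left-invariant exact $1$-form is identically zero}. Pulled back to the universal cover $G$, a left-invariant $1$-form is determined by left-translation from its value at the identity, so it is either identically zero or nowhere zero. Since $f$ attains a maximum on the compact manifold $M$, the form $(n-1)\, df$ vanishes at some point of $G$, and hence vanishes identically. This yields $\theta_2 = \theta_1$, contradicting the hypothesis $\theta_1 \neq \theta_2$.

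The main obstacle, modest as it is, lies in this second step: one must observe that the difference of two left-invariant Lee forms is itself left-invariant and then use compactness to force an exact left-invariant form to vanish. The first step is a routine calculation. As an alternative to the maximum-principle argument, one could invoke Nomizu's theorem, which yields an isomorphism between the Chevalley--Eilenberg cohomology of $\mathfrak{g}$ and $H^*_{\mathrm{dR}}(M;\R)$; under this identification, a left-invariant closed $1$-form that is exact on $M$ must already be exact in the Chevalley--Eilenberg complex, hence zero since there are no left-invariant $0$-forms apart from constants.
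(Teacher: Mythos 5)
Your proof is correct, and your first step (the conformal transformation rule $\theta_2-\theta_1=(n-1)\,df$, obtained from $d\omega_2^{n-1}=\theta_2\wedge\omega_2^{n-1}$ and the pointwise injectivity of $\alpha\mapsto\alpha\wedge\omega_2^{n-1}$) is exactly the paper's starting point, stated more carefully than in the paper, which drops the harmless factor $n-1$. Where you genuinely diverge is in showing that a left-invariant exact $1$-form on $M$ must vanish. The paper argues integrally: for a left-invariant field $X$ the function $X(f)=df(X)$ is a constant, and since $\mathcal{L}_X(f\dvol)=d\,i_X(f\dvol)$ for the bi-invariant volume form $\dvol$ (whose existence is Milnor's lemma, quoted in Section 2), Stokes forces that constant to be zero, so $f$ is constant. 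Your argument is pointwise: $df$ vanishes at a maximum of $f$ on the compact $M$, and a left-invariant $1$-form, being propagated by left translation from a single covector at the identity of $G$, is either nowhere zero or identically zero. Both are valid; yours is more elementary in that it does not invoke the bi-invariant volume form at all, so it would in fact establish the subsequent remark in the paper for arbitrary compact quotients $\Gamma\backslash G$ without the bi-invariance hypothesis. Your alternative via Nomizu's theorem also works, since $\theta_1-\theta_2$ is exact, hence closed. One pedantic point: the left-invariance of the $\theta_i$ is not literally a hypothesis but an immediate consequence of $\theta_i=Jd^*\omega_i$ and the left-invariance of $\omega_i$, as the paper takes care to record before starting the main argument.
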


\begin{proof} We notice first that $\theta_1$ and $\theta_2$ are left-invariant. Indeed, this follows form the fact that the Lee form $\theta$ of metric $\omega$ is uniquely determined by:
\begin{equation}\label{Lee}
\theta= Jd^* \omega,
\end{equation}
where $d^*$ is the adjoint of $d$ with respect to $\omega$, therefore the left-invariance is inherited from $\omega$. If $\theta_1-\theta_2=df$, then $df$ would be a left-invariant one-form. However, for any left-invariant field $X$, we have
\[
X(f) = df(X) = \theta(X) \in \mathbb{R},
\]
hence
\[
\int_M X(f) \dvol = \int_M \mathcal{L}_X (f \dvol) = \int_M di_X (f\dvol) = 0,
\]
where $\dvol$ is a bi-invariant volume form. Therefore, $X(f) = 0$ for all left-invariant vector fields, \ie $f$ is constant, which is false.
\end{proof}

\begin{remark} Note that \ref{notconf} holds in general for any compact quotient $\Gamma \backslash G$ endowed with a bi-invariant volume form.
	\end{remark}

\begin{example} 
As before, we consider a 2-step nilmanifold $M = \Gamma \backslash G$ of complex dimension $n \geq 3$, whose Lie algebra $\mathfrak{g}$ is given by the following structure equations:

\begin{equation*}
\left\{
\begin{array}{ll}
d\alpha_i &= 0, \ 1 \leq i \leq n-1, \\[.1in]
d\alpha_n &= \displaystyle\sum\limits_{r=1}^{n-1}  c^n_{r \overline{r}} \alpha_r \wedge \overline{\alpha}_r,
\end{array}
\right.
\end{equation*}
where $c^{n}_{r \overline{r}} \in \mathbb{Q}$, $c^n_{r \overline{r}} >0$, for all $1 \leq r \leq n-2$ and $c^n_{n-1 \overline{n-1}}<0$ such that $\sum^{n-1}_{r=1} c^n_{r \overline{r}} \neq 0$. Then 
$$\omega_1:= \mathrm{i}\sum_{i=1}^{n} \alpha_i \wedge \overline{\alpha}_i$$ 
is an lcb metric by \ref{lem:2step+lcb+b}. Its Lee form is
$$\theta=(\sum^{n-1}_{r=1} c^n_{r \overline{r}}) (\alpha_n + \overline{\alpha}_n),$$
 which is not exact by \ref{notconf}.

\medskip

To construct a balanced metric, choose $(a_{i \overline{j}})_{1 \le i, j \le n}$ such that

\begin{equation*}
\left\{
\begin{array}{ll}
a_{i \overline{j}} = 0, &\text{for all}\ \  1 \le i \neq j \le n, \\[.1in]
a_{i \overline{i}} > 0, &\text{for all}\ \  1 \le i \le n, \\[.1in]
a_{n-1 \overline{n-1}} = \displaystyle-\frac{\sum_{r=1}^{n-2} a_{r \overline{r}} c_{r \overline{r}}^n }{c^n_{n-1 \overline{n-1}}}.
\end{array}
\right.
\end{equation*}

One can check that equations \eqref{ec:lcb} are satisfied, the $(n-1, n-1)$-form $$\tilde{\omega}:=\mathrm{i}^{n-1} \sum^{n}_{i=1} a_{i \overline{i}}m_{i\overline{i}}$$ is positive, hence, by Lemma \ref{lem:2step+lcb+b}, it gives a balanced metric on $M$. Note that $\omega_1$ constructed above is not an lcK metric, by a result we shall prove in the sequel (see \ref{thm:lckbal}).

\end{example}

\hfill

\subsection{Incompatibility results}
\label{incompa}

\subsubsection{Balanced versus lcK}~

\begin{theorem}
\label{thm:lckbal}
Let $ M$ be a compact complex nilmanifold endowed with a left-invariant complex structure $J$. If it carries an lcK metric $\omega_1$ and a balanced metric $\omega_2$, both of them compatible with the complex structure $J$, then $(M, J)$ is a complex torus.

\end{theorem}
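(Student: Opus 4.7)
The plan is to reduce to the left-invariant setting and then derive algebraic constraints forcing $\mathfrak{g}$ to be abelian, hence $M$ a complex torus. First, I would show that via symmetrization using the bi-invariant volume form provided by Milnor's lemma, the Lee form $\theta_1$ of $\omega_1$ may be assumed left-invariant and closed. If its symmetrization is exact, then $\theta_1$ is exact, $\omega_1$ is globally conformally K\"ahler, and this produces a K\"ahler metric on $M$; Benson--Gordon's theorem (a compact K\"ahler nilmanifold is a complex torus) completes the argument. Otherwise, by the argument in \ref{notconf}, $\theta_1$ is a left-invariant closed non-exact 1-form. Similarly, the closed positive $(n-1,n-1)$-form $\omega_2^{n-1}$ symmetrizes to a cohomologous left-invariant closed positive form, from which \ref{n-1root} extracts a unique left-invariant balanced metric, still denoted $\omega_2$.

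The case $n=2$ is immediate from Vaisman's theorem recalled in the introduction: balanced coincides with K\"ahler in complex dimension 2, so $\omega_2$ is K\"ahler and any lcK metric is then globally conformally K\"ahler, producing a K\"ahler structure. For $n\ge 3$, the strategy is to invoke the structure theorem for compact nilmanifolds carrying an invariant lcK metric (Bazzoni, Ugarte, Sawai), which forces the underlying Lie group $G$ to be of 2-step type with 1-dimensional center. We are then in the setting of \ref{prop:2step}, with explicit structure equations, so that \ref{lem:2step+lcb+b} computes the Lee forms of both metrics. Writing $\theta_1 = \sum_{l=k+1}^{n} a^{(1)}_l(\alpha_l + \bar{\alpha}_l)$, non-triviality of $\theta_1$ amounts to the vector $b^{(1)}$ in \eqref{ec:lcb} being non-zero, while the balanced condition on $\omega_2$ forces $b^{(2)} = 0$, \ie $\sum_{i,j=1}^{k}\tilde{A}_{2,ij}\,c^{l}_{i\bar{j}} = 0$ for every $l > k$, where $\tilde{A}_2$ is positive-definite.

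The core of the proof, and the main obstacle, is to combine these algebraic data so as to force all structure constants to vanish. The closedness $d\theta_1 = 0$ imposes, by \ref{lem:2step+lcb+b}, the relations $\sum_{l>k} a^{(1)}_l c^{l}_{rs} = 0$ and $\sum_{l>k} a^{(1)}_l\bigl(c^{l}_{r\bar{s}} - \overline{c^{l}_{s\bar{r}}}\bigr) = 0$ for all $1 \le r,s \le k$, while the full lcK equation $d\omega_1 = \theta_1 \wedge \omega_1$ is strictly stronger than the lcb equation for $\omega_1^{n-1}$ encoded in \ref{lem:2step+lcb+b} and yields further constraints linking $c^{l}_{rs}, c^{l}_{r\bar{s}}$ to the positive-definite matrix $\tilde{A}_1$. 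The plan is to combine these with $b^{(2)}=0$ and the positive-definiteness of $\tilde{A}_2$ to conclude $c^{l}_{rs}=c^{l}_{r\bar{s}}=0$ for all admissible indices, so that $\mathfrak{g}$ is abelian and $\Gamma\backslash G$ is a complex torus. The delicate point is extracting enough independent linear constraints from the interplay of the lcK and balanced conditions to force this full vanishing; a possible alternative route, avoiding the structure theorem, is to apply Morse--Novikov vanishing on the nilmanifold (so that $\omega_1 = d\alpha - \theta_1 \wedge \alpha$ for an invariant 1-form $\alpha$) and pair directly against $\omega_2^{n-1}$, using $d\omega_2^{n-1}=0$ to turn the pointwise-positive top form $\omega_1 \wedge \omega_2^{n-1}$ into an expression forcing $\theta_1 = 0$.
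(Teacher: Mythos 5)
Your overall skeleton matches the paper's proof: average the balanced metric to a left-invariant one (via \cite[Theorem 2.2]{fg04}, relying on Belgun's symmetrization), invoke Sawai's structure theorem \cite{s07} to conclude that a non-toral $(M,J)$ must be a quotient of $H(n-1)\times\mathbb{R}$ with its standard complex structure, and then work with the explicit structure equations. However, there is a genuine gap at the decisive step: you stop after listing the constraints and announce, as ``the main obstacle,'' a plan to combine the lcK and balanced data so as to force all structure constants to vanish. That goal is misdirected. Once Sawai's theorem has been applied, the structure constants are not unknowns to be killed --- they are the fixed, non-zero constants of the Heisenberg algebra ($dz=-\sum_i x_i\wedge y_i$, i.e.\ $c^{n}_{i\bar\jmath}$ equal to a single non-zero constant times $\delta_{ij}$). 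The contradiction must come from the impossibility of the balanced condition on this specific algebra, and at that point the lcK metric has already done all its work (it was needed only to invoke the structure theorem); no further interplay between the two metrics, and no extra constraints from $d\omega_1=\theta_1\wedge\omega_1$, are required.

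The missing step is in fact immediate from an equation you yourself wrote down: the balanced condition is $b^{(2)}_l=\sum_{i,j=1}^{k}\tilde A_{2,ij}\,c^{l}_{i\bar\jmath}=0$ for $l>k$, and in the Sawai model ($k=n-1$, $c^{n}_{i\bar\jmath}=\tfrac12\delta_{ij}$) this reads $\tfrac12\sum_{i}\tilde A_{2,ii}=0$, which is impossible since the diagonal entries of the positive-definite matrix $\tilde A_2$ are positive. (Equivalently, as the paper argues directly: the only non-closed monomials of $\omega_2^{n-1}$ are $C_i\,x_1\wedge y_1\wedge\cdots\wedge\widehat{x_i\wedge y_i}\wedge\cdots\wedge x_{n-1}\wedge y_{n-1}\wedge\alpha\wedge z$ with $C_i>0$ by positivity of $\omega_2^{n-1}$, so $d\omega_2^{n-1}=-\bigl(\sum_i C_i\bigr)x_1\wedge y_1\wedge\cdots\wedge x_{n-1}\wedge y_{n-1}\wedge\alpha\neq0$.) Two smaller inaccuracies: the center of $\mathfrak{h}_{n-1}\times\mathbb{R}$ is spanned by $Z$ and $A$ and is $2$-dimensional, not $1$-dimensional; and the alternative route via Morse--Novikov vanishing is only a sketch and is not needed. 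As written, the proposal assembles all the ingredients but does not actually close the argument.
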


\begin{proof}
Assume $M$ is not a complex torus. Following the averaging procedure presented in \cite[Theorem 7]{bel00}, it was shown in \cite[Theorem 2.2]{fg04} that there also exists a left-invariant balanced metric $\omega_2$. Then, using the structure theorem of \cite{s07}, $(M, J)$ is biholomorphic to a quotient of $(H(n-1) \times \mathbb{R}, J_0)$, where $H(n-1)$ is the real Heisenberg group, thus its Lie algebra $\mathfrak{h}_{n-1} \times \mathbb{R}$ is generated by 
\begin{equation}\label{eq:Liebrackets}
\{X_i, Y_i, Z, A\} \text{ with } [X_i, Y_i] = Z, JX_i = Y_i \text{ and } JA=Z, \ 1 \le i \le n-1,
\end{equation}
therefore
\begin{equation}
\label{eq:lckbal1}
dx_i = dy_i = d\alpha = 0 \text{ and } dz = -\sum_{i=1}^{n-1} x_i \wedge y_i,
\end{equation}
where $\{x_i, y_i, z, \alpha \}$ is the corresponding co-frame.

Since $\omega_2$ is $(1,1)$ and positive, $\omega_2^{n-1}$ is a positive $(n-1, n-1)$ form (see \cite[p. 131]{d12}),
\[
\omega_2^{n-1}(X_1, Y_1, ..., \widehat{X_i, Y_i}, ..., X_{n-1}, Y_{n-1}, A, Z) = C_i > 0.
\] 
Consequently, using \eqref{eq:lckbal1}, all terms in $\omega_2^{n-1}$ apart from those of type
\[
C_i x_1 \wedge y_1 \wedge ... \wedge \widehat{x_i \wedge y_i} \wedge ... \wedge x_{n-1} \wedge y_{n-1} \wedge \alpha \wedge z
\]
are closed, thus
\[
d\omega_2^{n-1} = -(\sum_{i=1}^{n-1} C_i) x_1 \wedge y_1 \wedge ... \wedge x_{n-1} \wedge y_{n-1} \wedge \alpha \neq 0,
\]
a contradiction.
\end{proof}

\medskip

\begin{remark}
	Note that the assumption of left-invariance for the complex structure was crucial for the use of the averaging trick in \cite[Theorem 2.2]{fg04}.
\end{remark}

\subsubsection{$k$-Gauduchon versus lcK}

We need the following characterization of $k$-Gauduchon metrics:

\begin{lemma} {\rm(cf. \cite[Proposition 2.2]{lu17})}
\label{lem:k=1}
	Let $M$ be a compact complex nilmanifold of complex dimension $n$ with a left-invariant complex structure $J$ and  a left-invariant Hermitian metric $\omega$. Then $\omega$ is $k$-Gauduchon for some $1 \le k < n - 1$ if and only if it is $1$-Gauduchon. 
\end{lemma}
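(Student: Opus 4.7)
My plan is to exploit the identity
\[
dd^c\omega^k \wedge \omega^{n-k-1} = k\, dJd\omega \wedge \omega^{n-2} - k(k-1)\, Jd\omega \wedge d\omega \wedge \omega^{n-3}
\]
derived in the proof of Proposition \ref{prop:aceeasiForma} (see equation \eqref{eq:aceeasiForma1}), coupled with the following observation: on the compact nilmanifold $M = \Gamma\backslash G$, any left-invariant top form is a constant multiple of the nowhere-vanishing bi-invariant volume form furnished by Section \ref{preli}, hence vanishes identically if and only if it integrates to zero over $M$. Since $J$ and $\omega$ are left-invariant, every form appearing in the identity above is left-invariant as well, so this principle applies to each of them in top degree.

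First I would apply Stokes' theorem to the left-invariant $(2n-1)$-form $Jd\omega \wedge \omega^{n-2}$. Because $Jd\omega$ has odd degree $3$, the Leibniz rule gives the integral identity
\[
\int_M dJd\omega \wedge \omega^{n-2} = (n-2) \int_M Jd\omega \wedge d\omega \wedge \omega^{n-3}.
\]
Assuming now that $\omega$ is $k$-Gauduchon for some $1 \le k < n-1$, the displayed identity reads
\[
dJd\omega \wedge \omega^{n-2} = (k-1)\, Jd\omega \wedge d\omega \wedge \omega^{n-3}
\]
as an equality of left-invariant top forms. Integrating and comparing with the Stokes identity yields $(n-k-1) \int_M Jd\omega \wedge d\omega \wedge \omega^{n-3} = 0$. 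Since $k < n-1$, this integral vanishes, and the left-invariant principle upgrades this to $Jd\omega \wedge d\omega \wedge \omega^{n-3} \equiv 0$ on $M$. Plugging back into the specialized identity delivers $dJd\omega \wedge \omega^{n-2} \equiv 0$, i.e., the $1$-Gauduchon condition.

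The converse is essentially the same argument run in reverse: if $\omega$ is $1$-Gauduchon, then $dJd\omega \wedge \omega^{n-2} = 0$, so the Stokes identity forces $Jd\omega \wedge d\omega \wedge \omega^{n-3}$ to integrate to zero, and the left-invariant principle again makes it vanish identically, whence the displayed identity gives $dd^c\omega^k \wedge \omega^{n-k-1} = 0$ for every $1 \le k < n-1$. The only nontrivial ingredient I anticipate is the existence of the bi-invariant volume form on $M$, which is Milnor's lemma already recorded in Section \ref{preli}; apart from that, the argument is a clean combination of the identity from Proposition \ref{prop:aceeasiForma}, integration by parts, and the fact that left-invariant top forms are detected by their integral, so I do not foresee a serious obstacle.
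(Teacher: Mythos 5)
Your proof is correct and follows essentially the same route as the paper's: the identity \eqref{eq:aceeasiForma1}, integration by parts against the compactness of $M$, and the principle that a left-invariant top-degree form on a nilmanifold vanishes if and only if its integral does. The paper's own argument is simply a terser version of yours, citing \eqref{eq:aceeasiForma1}, \eqref{eq:k-Gaud} and \eqref{eq:1G-integrala} instead of re-deriving the Stokes identity explicitly.
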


\medskip

\noindent{\em Proof.} 
	We use the fact that, on a nilmanifold, a left-invariant $2n$-form $\eta$ is a multiple of the volume form, so
	\[
	\eta = 0 \iff \int_M \eta = 0.
	\]
	
	With the above equivalence and from \eqref{eq:aceeasiForma1}, \eqref{eq:k-Gaud} and \eqref{eq:1G-integrala}, we have
	\[
	\omega \text{ is } k-\text{Gauduchon } \iff \int_M dd^c\omega \wedge \omega^{n-2} = 0, \iff \omega \text{ is } 1-\text{Gauduchon}. \qquad\qquad\qquad\qquad\quad\hfill{\blacksquare}
	\]

\begin{theorem}
	\label{thm:lckkG}
	Let $ M$ be a compact complex nilmanifold endowed with a left-invariant complex structure $J$. If it carries an lcK metric $\omega_1$ and a left invariant $k$-Gauduchon metric $\omega_2$, for some $1 \le k < n-1$, both of them compatible with the complex structure $J$, then $(M, J)$ is a complex torus.
	
\end{theorem}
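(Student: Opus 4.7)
The plan is first to collapse the $k$-Gauduchon hypothesis on $\omega_2$ into pluriclosedness via Lemma~\ref{lem:k=1}, then to invoke Sawai's structure theorem exactly as in Theorem~\ref{thm:lckbal}, and finally to rule out left-invariant pluriclosed metrics on the resulting Heisenberg--$\mathbb{R}$ model by a direct computation on the Lie algebra.

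Since $\omega_2$ is left-invariant and $k$-Gauduchon for some $1\le k<n-1$ (in particular $n\ge 3$), Lemma~\ref{lem:k=1} immediately gives $dd^c\omega_2=0$, i.e.\ $\omega_2$ is pluriclosed. Assume for a contradiction that $(M,J)$ is not a complex torus. Then the averaging procedure of Belgun~\cite{bel00}, used in \cite[Theorem~2.2]{fg04} and already invoked in the proof of Theorem~\ref{thm:lckbal}, upgrades $\omega_1$ to a left-invariant lcK metric on $(M,J)$; Sawai's structure theorem~\cite{s07} then forces $(M,J)$ to be biholomorphic to $\Gamma\backslash(H(n-1)\times\mathbb{R},J_0)$, with real Lie algebra generated by $\{X_i,Y_i,Z,A\}_{1\le i\le n-1}$ satisfying $[X_i,Y_i]=Z$, $JX_i=Y_i$, $JA=Z$, and dual coframe satisfying $dx_i=dy_i=d\alpha=0$ and $dz=-\sum_i x_i\wedge y_i$.

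Passing to the $(1,0)$-coframe $\beta_i=x_i+\sqrt{-1}\,y_i$ for $1\le i\le n-1$ and $\beta_n=\alpha+\sqrt{-1}\,z$, these real structure equations translate into
\[
d\beta_i=0 \text{ for } 1\le i\le n-1, \qquad d\beta_n=\tfrac{1}{2}\sum_{i=1}^{n-1}\beta_i\wedge\bar\beta_i,
\]
so that $d\beta_n$ is of pure type $(1,1)$; in particular $\partial\beta_r=0$ for every $r$, $\partial\bar\beta_r=0$ for $r<n$, and $\partial\bar\beta_n=-\tfrac{1}{2}\sum_{i=1}^{n-1}\beta_i\wedge\bar\beta_i$. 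Writing a general left-invariant Hermitian metric as $\omega_2=\sqrt{-1}\sum_{r,s=1}^{n}g_{r\bar s}\,\beta_r\wedge\bar\beta_s$ with $(g_{r\bar s})$ Hermitian positive definite, a short computation of $\bar\partial\omega_2$ followed by $\partial$ produces a decomposition of the form
\[
\partial\bar\partial\omega_2=\tfrac{\sqrt{-1}}{4}\,g_{n\bar n}\sum_{i,j=1}^{n-1}\beta_i\wedge\beta_j\wedge\bar\beta_i\wedge\bar\beta_j+\Theta,
\]
where every monomial of the remainder $\Theta$ contains the factor $\beta_n$ and is therefore linearly independent from the explicit first sum. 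Because $n\ge 3$, that first sum equals $2\sum_{i<j}\beta_i\wedge\beta_j\wedge\bar\beta_i\wedge\bar\beta_j\ne 0$, so the pluriclosed identity $\partial\bar\partial\omega_2=0$ forces $g_{n\bar n}=0$, contradicting positive definiteness.

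The main obstacle is the accurate bookkeeping in the computation of $\partial\bar\partial\omega_2$: the $\beta_n$-free summand arises only from the unique family of terms in $\bar\partial\omega_2$ in which the trailing $\bar\beta_s$-factor has $s=n$, so that the subsequent $\partial$ acting on $\bar\beta_n$ introduces a $(1,1)$-form with no $\beta_n$. Once this single source is isolated, the coefficient of the $(2,2)$-form on the Lie algebra reduces cleanly to $g_{n\bar n}$ alone and the contradiction is immediate; no further analysis of the other Hermitian entries of $\omega_2$ is required.
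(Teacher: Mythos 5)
There is a genuine gap in your reduction step. Lemma~\ref{lem:k=1} says that a left-invariant metric which is $k$-Gauduchon for some $1\le k<n-1$ is $1$-Gauduchon, and by Definition~\ref{def1}(3) the $1$-Gauduchon condition is $dd^c\omega_2\wedge\omega_2^{n-2}=0$, \emph{not} $dd^c\omega_2=0$. Pluriclosedness is strictly stronger than $1$-Gauduchon in general, and nothing in the lemma (whose proof only controls the top-degree form $dd^c\omega_2\wedge\omega_2^{n-2}$ via integration against the bi-invariant volume) lets you conclude that the $(2,2)$-form $dd^c\omega_2$ itself vanishes. Your final contradiction, however, is extracted precisely from ``the pluriclosed identity $\partial\bar\partial\omega_2=0$'' forcing $g_{n\bar n}=0$, so as written it rests on a hypothesis you do not have.

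The good news is that your Lie-algebra computation is correct (and is essentially the paper's own computation, transcribed from the real coframe $\{x_i,y_i,\alpha,z\}$ into the $(1,0)$-coframe $\{\beta_r\}$): one does get
\[
\partial\bar\partial\omega_2=\tfrac{\sqrt{-1}}{4}\,g_{n\bar n}\sum_{i,j=1}^{n-1}\beta_i\wedge\beta_j\wedge\bar\beta_i\wedge\bar\beta_j .
\]
To close the gap you must wedge this with $\omega_2^{n-2}$ rather than look at the form itself: each $\sqrt{-1}^{\,2}\beta_i\wedge\beta_j\wedge\bar\beta_i\wedge\bar\beta_j$ with $i\ne j$ is a positive $(2,2)$-form, and since $\omega_2^{n-2}$ is a positive $(n-2,n-2)$-form, each wedge product is a nonnegative multiple of $\dvol$, with at least one strictly positive because $n\ge 3$ (these are the coefficients $C_{i,j}>0$ in the paper's proof). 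Hence $dd^c\omega_2\wedge\omega_2^{n-2}$ equals $g_{n\bar n}>0$ times a positive multiple of $\dvol$, which contradicts the $1$-Gauduchon identity that Lemma~\ref{lem:k=1} actually supplies. With that one correction your argument coincides with the paper's; the appeal to Belgun/Fino--Grantcharov averaging and to Sawai's structure theorem is handled the same way in both.
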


\begin{proof}
	Assume $M$ is not a complex torus. Then, using the structure theorem of \cite{s07} as in \ref{thm:lckbal}, $(M, J)$ has the same Lie algebra structure.
	
	Since $\omega_2$ is of type $(1,1)$,
	\begin{equation*}
	\begin{split}
	\omega_2 &= \sum_{i,j=1}^{n-1} A_{i,j}(x_i \wedge x_j + y_i \wedge y_j) + \sum_{i,j=1}^{n-1} B_{i,j}(x_i \wedge y_j - y_i \wedge x_j) \\
	&+ \sum_{i=1}^{n-1} C_i(x_i \wedge \alpha + y_i \wedge z) + \sum_{i=1}^{n-1} D_i(x_i \wedge z - y_i \wedge \alpha) \\
	&+ E \alpha \wedge z.
	\end{split}
	\end{equation*}
	Notice that, since $\omega_2$ is positive, $E = \omega_2(A, Z) > 0$ and, as before, since $\omega_2^{n-2}$ is a positive $(n-2, n-2)$ form,
	\[
	\omega_2^{n-2}(X_1, Y_1, ..., \widehat{X_i, Y_i}, ..., ..., \widehat{X_j, Y_j}, ..., X_{n-1}, Y_{n-1}, A, Z) = C_{i,j} > 0.
	\]
	
	We first compute 
	\begin{equation}
	\label{eq:lckkG1}
	\begin{split}
	dd^c \omega_2 \wedge \omega_2^{n-2} &= dJd \omega_2 \wedge \omega_2^{n-2} \\
	&= E\sum_{i,j=1}^{n-1} x_i \wedge y_i \wedge x_j \wedge y_j \wedge \omega_2^{n-2} \\
	&= E(\sum_{i,j=1}^{n-1} C_{i,j}) \dvol. 
	\end{split}
	\end{equation}
	
	Now, since $\omega_2$ is $k$-Gauduchon, by \ref{lem:k=1}, it is $1$-Gauduchon, therefore
	\[
	0 = dd^c\omega_2 \wedge \omega_2^{n-2} = E\big(\sum_{i,j=1}^{n-1} C_{i,j}\big) \dvol \neq 0,
	\]
	a contradiction. 
\end{proof}

\begin{corollary}
\label{cor:SKTsilcK}
	Let $ M$ be a compact complex nilmanifold endowed with a left-invariant complex structure $J$. If it carries an lcK metric $\omega_1$ and a pluriclosed metric $\omega_2$, both of them compatible with the complex structure $J$, then $(M, J)$ is a complex torus.
\end{corollary}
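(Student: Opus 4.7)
The plan is to reduce this corollary to Theorem 4.5 by producing a \emph{left-invariant} pluriclosed metric from the given $\omega_2$, and then observing that a pluriclosed metric is exactly a $1$-Gauduchon metric (the case $k=1$ of Definition 1.1(3)), so that Theorem 4.5 applies with $k=1$ as soon as $n \ge 3$.

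The key step is thus the production of a left-invariant pluriclosed metric compatible with $J$. I would invoke the averaging procedure of Belgun \cite{bel00}, exactly as it was used in the proof of Theorem~\ref{thm:lckbal} via \cite[Theorem 2.2]{fg04}: one integrates $\omega_2$ against the bi-invariant probability measure associated with the bi-invariant volume form $d\mathrm{vol}$ on $\Gamma \backslash G$ (whose existence on the nilmanifold is guaranteed by the result of Milnor recalled in Section \ref{preli}) to obtain a left-invariant $2$-form $\omega_2^{\mathrm{av}}$. Because averaging is $\mathbb{R}$-linear and commutes with pullbacks by the $G$-action, it commutes with $d$ and, since $J$ is left-invariant, with $J$ and hence with $d^c$. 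Left-invariance of $J$ also ensures that averaging preserves the type decomposition and positivity, so $\omega_2^{\mathrm{av}}$ is again a $(1,1)$-form and positive definite; and since $dd^c\omega_2 = 0$ is a closed linear condition, the identity $dd^c\omega_2^{\mathrm{av}} = 0$ follows.

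Once we have the left-invariant pluriclosed metric $\omega_2^{\mathrm{av}}$, which is trivially $1$-Gauduchon, Theorem~\ref{thm:lckkG} applied with $k = 1$ (valid for $n \ge 3$) gives that $(M, J)$ is a complex torus. Thus the proof reduces to three lines: average, observe pluriclosed $=$ $1$-Gauduchon, quote Theorem~\ref{thm:lckkG}.

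The only real subtlety — and the place I expect the main obstacle, though it is a mild one — is to make sure the averaging indeed produces a genuine Hermitian metric, i.e.\ that positivity of $\omega_2$ pointwise survives averaging and that the result is not merely positive semi-definite. Since the pointwise average of positive-definite Hermitian forms is positive-definite (a convex combination, roughly speaking, of positive-definite forms), this is immediate, but one should state it carefully. Apart from this, the corollary is essentially a one-line reduction to the left-invariant case already handled.
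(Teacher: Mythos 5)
Your proposal matches the paper's proof: the paper likewise averages $\omega_2$ against the bi-invariant volume to obtain a left-invariant pluriclosed metric (citing \cite[Proposition 21]{u07} for the fact that symmetrization preserves the pluriclosed condition, where you invoke the Belgun procedure directly) and then applies \ref{thm:lckkG} with $k=1$. Your explicit remark that the reduction needs $n\ge 3$ is a point the paper leaves implicit, but the argument is otherwise identical.
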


\begin{proof}
	One can again use an averaging procedure for $\omega_2$ to obtain an invariant pluriclosed metric $\omega_2$ (see \cite[Proposition 21]{u07}).
\end{proof}

\begin{remark}
	\ref{thm:lckkG} does not hold true if we drop the assumption of left-invariance for the $k$-Gauduchon metric, as the following example shows:
	
	Take the nilmanifold $M$ in complex dimension $3$ satisfying equations \eqref{eq:lckbal1} and let $\omega$ be the left-invariant lcK form. According to \cite[Proposition 3.8]{ip}, the associated $\gamma_1(\omega)$ invariant introduced in \cite{fww13} is negative. On the other hand, since $M$ is a $3$-fold, there exists a Hermitian metric $\omega'$ with positive $\gamma_1(\omega')$ (see \cite[Theorem 6]{fww13}). By \cite[Corollary 10]{fww13}, there exists a metric $\omega_0$ with $\gamma_1(\omega_0) = 0$. Due to \cite[Proposition 8]{fww13}, we may find a $1$-Gauduchon metric in the conformal class of $\omega_0$.
\end{remark}

\begin{remark} The balanced and $k$-Gauduchon conditions are however compatible on the same nilmanifold, as long as they are not imposed to the same metric. In \cite{lu17} a family of compact complex nilmanifolds carrying both a balanced and an astheno-K\" ahler metric (which is also $k$-Gauduchon) is constructed (see \cite[Theorem 2.4]{lu17}). See another explicit example of such a phenomenon on a complex nilmanifold of dimension 4 in \cite[Proposition 4.8, Corollary 4.10]{fu11}. 
\end{remark}

\subsubsection{Locally conformally hyperK\" ahler structures versus left-invariance}

We start with  the following general statement concerning lcK complex nilmanifolds:

\begin{theorem}  On a compact complex nilmanifold $(M=\Gamma\backslash G, J)$ with  left-invariant $J$, there exists no left-invariant non-degenerate $(2,0)$-form $\omega$ such that $d\omega=\theta \wedge \omega$, where $\theta$ is the Lee form of a left-invariant lcK metric. 
\end{theorem}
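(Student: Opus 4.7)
The plan is to combine the rigid structural description of compact complex nilmanifolds carrying a left-invariant lcK metric with a straightforward type-decomposition of the equation $d\omega = \theta\wedge\omega$. Assume for contradiction that such $\omega$ exists and that the given lcK metric has nontrivial Lee form $\theta\neq 0$ (the Kähler case would force $M$ to be a torus by Benson--Gordon, a situation tacitly excluded in the locally conformally hyperKähler setting that motivates this theorem). Then the structure theorem of \cite{s07}, already applied in the proofs of \ref{thm:lckbal} and \ref{thm:lckkG}, forces $(M,J)$ to be biholomorphic to a quotient of $(H(n-1)\times\R,J_0)$, whose Lie algebra $\mathfrak{h}_{n-1}\times\R$ is generated by $\{X_i,Y_i,Z,A\}_{1\le i\le n-1}$ with brackets and complex structure as in \eqref{eq:Liebrackets}; the dual coframe then satisfies the structure equations \eqref{eq:lckbal1}. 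Non-degeneracy of a $(2,0)$-form forces $n = \dim_\C M$ to be even.

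The next step is to switch to the $(1,0)$-coframe $\beta_i := x_i + \mathrm{i}y_i$ for $1\le i < n$ and $\beta_n := \alpha + \mathrm{i}z$. A direct computation from \eqref{eq:lckbal1} gives $d\beta_j = 0$ for $j<n$ while $d\beta_n = \tfrac{1}{2}\sum_{j=1}^{n-1}\beta_j\wedge\bar\beta_j$; in particular each $d\beta_j$ is purely of type $(1,1)$, so for any left-invariant $(2,0)$-form $\omega$ the derivative $d\omega$ has no $(3,0)$-component. Writing $\theta = \theta^{1,0}+\overline{\theta^{1,0}}$, the projection of $d\omega = \theta\wedge\omega$ onto $\Lambda^{3,0}$ then forces $\theta^{1,0}\wedge\omega = 0$.

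For $n\ge 4$ the Lefschetz-type map $\Lambda^{1,0}\to\Lambda^{3,0}$, $\eta\mapsto\eta\wedge\omega$, is injective by non-degeneracy of the holomorphic-symplectic form $\omega$, so $\theta^{1,0}=0$, whence $\theta=0$ by reality; this contradicts $\theta\neq 0$. The one subtlety, which is really the main obstacle, is the extremal case $n=2$: there $\Lambda^{3,0}=0$ trivially, so the previous step is vacuous. In that case, however, the same structure equations give $\beta_1\wedge d\beta_2=0$ and hence $d\omega = 0$ identically for $\omega = a\,\beta_1\wedge\beta_2$; the defining equation then degenerates to $\theta\wedge\omega = 0$, whose $(2,1)$-part $\theta^{0,1}\wedge\omega=0$ still forces $\theta^{0,1}=0$ by linear independence of $\bar\beta_1\wedge\beta_1\wedge\beta_2$ and $\bar\beta_2\wedge\beta_1\wedge\beta_2$ in $\Lambda^{2,1}$. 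Hence in every (even) complex dimension we conclude $\theta=0$, the desired contradiction.
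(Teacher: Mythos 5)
Your proof is correct, but it runs on a different engine than the paper's. Both arguments start the same way: reduce to Sawai's model \eqref{eq:Liebrackets}--\eqref{eq:lckbal1} (equivalently the $(1,0)$ structure equations \eqref{lck10}), where a left-invariant $(2,0)$-form has $d\omega$ of pure type $(2,1)$. From there the paper invokes Tsukada's theorem to pin down the Lee cone precisely --- the essential output being that the coefficient $t$ of $\beta_{2m}=\alpha_{2m}+\overline{\alpha}_{2m}$ in $\theta$ is strictly positive --- and then derives the contradiction from a single monomial, $\alpha_{i_0}\wedge\alpha_{2m}\wedge\overline{\alpha}_{2m}$, which occurs in $\theta\wedge\omega$ with coefficient $tA_{i_0 2m}\neq 0$ (non-degeneracy only enters to produce $A_{i_0 2m}\neq 0$) but cannot occur in $d\omega$. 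You instead bypass Tsukada entirely: projecting $d\omega=\theta\wedge\omega$ onto $\Lambda^{3,0}$ kills $\theta^{1,0}\wedge\omega$, and the symplectic (Lefschetz) injectivity of $\eta\mapsto\eta\wedge\omega$ on $\Lambda^{1,0}$ for $n\geq 4$ forces $\theta^{1,0}=0$, hence $\theta=0$, contradicting the fact that a left-invariant lcK metric on a non-toral nilmanifold cannot be K\"ahler. Your approach is more self-contained (no input on the Lee cone beyond $\theta\neq 0$) and in fact shows the paper's use of Tsukada is stronger than needed; the price is the degenerate case $n=2$, which you correctly handle separately through the $(2,1)$-component, whereas the paper's monomial argument covers $m=1$ uniformly. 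Both your proof and the paper's tacitly assume $M$ is not a torus (needed both for Sawai's theorem and for the statement itself, which fails on an even-dimensional complex torus where $\theta=0$ and closed holomorphic symplectic forms abound); you at least make this assumption explicit.
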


\begin{proof}
	In order to prove this, we use again Sawai's description given by \eqref{eq:Liebrackets} and \eqref{eq:lckbal1}. The corresponding co-frame of $\mathfrak{g}^{1,0}$ is given by:
\begin{equation}\label{lck10}
\left\{
    \begin{array}{ll}
       d\alpha_i=0, &  1 \leq i \leq 2m-1, \\[.1in]
       d\alpha_{2m}=\displaystyle\frac{1}{2}\sum_{i=1}^{2m-1} \alpha_i \wedge \overline{\alpha}_i
    \end{array}
\right.
\end{equation} 
Let 
$$L(M, J):=\{\theta \mid \text{there exists  $\omega$ left-invariant and lcK:\ $d\omega=\theta \wedge \omega$}\}$$
be the space of those 1-forms which can be Lee forms of an lcK metric. 
Since by \eqref{Lee}, the Lee form $\theta$ of a left-invariant lcK metric is left-invariant, we claim that:
\begin{equation}
L(M, J)=\{t\beta_{2m}+t_1\beta_1+\ldots t_{2m-1}\beta_{2m-1} \mid t >0,\ t_i \in \mathbb{R}, \ \beta_i:=\alpha_i + \overline{\alpha}_i\},
\end{equation}
This is a consequence of the fact the positive $(1,1)$-form 
$$\omega_0:= \sum_m\beta_{2m} \wedge J\beta_{2m} - dJ\beta_{2m}$$ 
is a Vaisman metric with Lee form $\beta_{2m}$ (see \cite{s07}) and by the following result of Tsukada:
\begin{theorem}\noindent{\rm(}\cite[Theorem 5.1]{tsu}\noindent{\rm)}
Let $(M, J, g)$ be a compact Vaisman manifold and $\theta_0$ the the Lee form of $g$. The set of possible cohomology classes of Lee forms for lcK metrics is:
\begin{equation}
\mathrm{Lee}(M, J):=\{t[\theta_0]+ \mathcal{H} \mid t>0\},
\end{equation} 
where $ \mathcal{H} := \{[\eta] \mid \eta=\Re \gamma,\ \gamma\, \ \text{holomorphic} \ $1$-\text{form},\ \  d\eta=0\}\subseteq H^1_{dR}(M) $.
\end{theorem}

\medskip

Let now $\omega$ be a non-degenerate $(2,0)$-form such that $d\omega=\theta \wedge \omega$, with 
$$\theta=t\beta_{2m}+t_1\beta_1+\ldots t_{2m-1}\beta_{2m-1},\quad t>0.$$ 
Then $\omega=\sum_{i<j} A_{ij} \alpha_i \wedge \alpha_j$, with $A_{ij} \in \mathbb{C}$.

By \eqref{lck10}, we thus obtain 
\begin{equation}
d\omega=\sum_{i \neq j=1}^{2m-1}A_{i2m}\alpha_i \wedge \alpha_j \wedge \overline{\alpha}_j=\theta \wedge \sum_{i<j} A_{ij} \alpha_i \wedge \alpha_j. 
\end{equation}

Since $\omega$ is non-degenerate, there exists an index $1 \leq i_0 \leq 2m-1$ such that $A_{i_02m} \neq 0$. As $t>0$, in the right hand side term the monomial $\overline{\alpha}_{2m} \wedge \alpha_{i_0} \wedge \alpha_{2m}$ appears with positive coefficient, whereas in the left hand side it does not and the claim is proved. \end{proof}

\medskip

Recall now that a hyperHermitian manifold $(M,g,I,J,K)$ is {\em locally conformally hyperK\"ahler (lchK)} if $g$ is locally conformal to a hyperk\"ahler metric. The metric $g$ is then lcK w.r.t. to all of $I,J,K$, with the same Lee form: denoting with $\omega_I$ (respectively $\omega_J$, $\omega_K$) the fundamental form asssociated to the Hermitian structure $(g,I)$ (respectively $(g,J)$, $(g,K)$), we then have $d\omega_I=\theta\wedge\omega_I$, $d\omega_J=\theta\wedge\omega_J$, and $d\omega_K=\theta\wedge\omega_K$. If $M$ is compact, such a metric is necessarily Vaisman (see \cite{oo} for an alternative characterization). 

We  prove that the locally conformally hyperK\" ahler condition is incompatible with left-invariance. Indeed, an lchK structure with left-invariant structures $I_1$, $I_2$, $I_3$ would imply the existence of the left-invariant (2, 0)-form $\tilde{\omega}:=\omega_2 + \mathrm{i} \omega_3$, with respect to $I$, and moreover it would satisfy $d\tilde{\omega}=\theta \wedge \tilde{\omega}$, which is impossible. We thus have:

\begin{corollary} Let $\Gamma \backslash G$ be a compact nilmanifold and $g$ a left-invariant metric. There are no left-invariant complex structures $I, J, K$ such that $g$ becomes lchK with respect to them.
\end{corollary}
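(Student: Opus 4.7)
The approach is a direct reduction to the theorem just established: any hypothetical left-invariant lchK structure would produce precisely the forbidden left-invariant non-degenerate $(2,0)$-form that the theorem rules out, yielding an immediate contradiction.

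Assume, for contradiction, that compatible left-invariant complex structures $I, J, K$ exist making $g$ lchK. By compactness of $M$ (as recalled in the paragraph before the corollary) $g$ is Vaisman; in particular $g$ is lcK with respect to each of $I, J, K$, all three sharing one common Lee form $\theta$. Left-invariance of $g, I, J, K$ propagates to the fundamental forms $\omega_I, \omega_J, \omega_K$, and via $\theta = I d^*\omega_I$ (see the Conventions) also to $\theta$ itself.

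Next, I introduce the classical hyperhermitian form
\[
\tilde\omega := \omega_J + \sqrt{-1}\,\omega_K
\]
on $(M, I)$, and verify three properties. (i) It is of pure type $(2,0)$ with respect to $I$: from the quaternion identity $IJ = -JI = K$ one sees that $J$ interchanges $T_I^{1,0}$ and $T_I^{0,1}$, and the Hermitian vanishing $g(T_I^{1,0}, T_I^{1,0}) = 0$ then forces $\tilde\omega$ to vanish whenever one of its arguments lies in $T_I^{0,1}$. (ii) It is non-degenerate, since on $T_I^{1,0} M$ it restricts to the familiar (twisted) holomorphic symplectic pairing of hyperhermitian geometry, whose non-degeneracy follows from that of $\omega_J$. (iii) Since $d\omega_J = \theta\wedge\omega_J$ and $d\omega_K = \theta\wedge\omega_K$, linearity gives $d\tilde\omega = \theta\wedge\tilde\omega$.

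Thus $\tilde\omega$ is a left-invariant, non-degenerate $(2,0)$-form satisfying $d\tilde\omega = \theta\wedge\tilde\omega$, with $\theta$ the Lee form of the left-invariant lcK metric $g$ on $(M, I)$, in direct contradiction with the preceding theorem. The only non-trivial step is the type and non-degeneracy check for $\tilde\omega$, and this is a classical consequence of the quaternion relations; the remainder of the argument is purely formal.
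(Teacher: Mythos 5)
Your proposal is correct and follows exactly the paper's own argument: the paper likewise forms $\tilde\omega := \omega_J + \sqrt{-1}\,\omega_K$, observes it is a left-invariant non-degenerate $(2,0)$-form with respect to $I$ satisfying $d\tilde\omega = \theta\wedge\tilde\omega$, and invokes the preceding theorem for the contradiction. Your additional verification of the type and non-degeneracy of $\tilde\omega$ via the quaternion relations is the standard fact the paper leaves implicit.
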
 

\hfill

\noindent{\bf Acknowledgment:} We thank Anna Fino and Andrei Moroianu for a careful reading of a first draft of the paper, Stefan Ivanov for pointing out the results in \cite{ip}, Luigi Vezzoni for useful suggestions and the anonymous referee for his or her helpful comments.

\end{document}